\definecolor{expcol}{rgb}{1.0,0.5,0.5}
\definecolor{ecol}{rgb}{0.0, 0.5, 1.0}
\definecolor{ab}{rgb}{0.5, 0.0, 1.0}
\numberwithin{figure}{section}
\numberwithin{table}{section}
\numberwithin{equation}{section}
\newcommand{\PP}[0]{\mathbb{P}}
\newcommand{\MM}[0]{\mathbb{M}}
\newcommand{\DD}[0]{\mathbb{D}}
\newcommand{\WW}[0]{\mathbb{W}}
\newcommand{\HH}[0]{\mathbb{H}}
\newcommand{\Smb}[0]{\mathbb{S}}
\newcommand{\QQ}[0]{\mathbb{Q}}
\newcommand{\EE}[0]{\mathbb{E}}
\newcommand{\NN}{\mathbb{N}}
\newcommand{\RR}{\mathbb{R}}
\newcommand{\one}{\mathbf{1}}
\newcommand{\cle}{\mathcal{E}}
\newcommand{\clh}{\mathcal{H}}
\newcommand{\clf}{\mathcal{F}}
\newcommand{\clg}{\mathcal{G}}
\newcommand{\clp}{\mathcal{P}}
\newcommand{\clc}{\mathcal{C}}
\newcommand{\clx}{\mathcal{X}}
\newcommand{\cll}{\mathcal{L}}
\newcommand{\clr}{\mathcal{R}}
\newcommand{\cld}{\mathcal{D}}
\newcommand{\cly}{\mathcal{Y}}
\newcommand{\cls}{\mathcal{S}}
\newcommand{\clm}{\mathcal{M}}
\newcommand{\cla}{\mathcal{A}}
\newcommand{\Om}{\Omega}
\newcommand{\om}{\omega}
\newcommand{\veps}{\varepsilon}
\newcommand{\cln}{\mathcal{N}}
\newcommand{\clb}{\mathcal{B}}
\definecolor{expcol}{rgb}{1.0,0.5,0.5}
\definecolor{ecol}{rgb}{0.0, 0.0, 1.0}
\newtheorem{lemma}{Lemma}[section]
\newtheorem{theorem}[lemma]{Theorem}
\newtheorem{remark}[lemma]{Remark}
\newtheorem{definition}{Definition}[section]
\begin{document}

 \title{On Some Extensions of the Bou\'{e}-Dupuis Variational Formula}
 \author{Amarjit Budhiraja}

%
%

\maketitle

\abstract{The Bou\'{e}-Dupuis variational formula gives a representation for log Laplace transforms of bounded measurable functions of a finite dimensional Brownian motion on a compact time interval as an infimum of a suitable cost over a collection of non-anticipative control processes. This variational formula has proved to be very useful in studying a variety of large deviation problems. In this article we collect some extensions of this basic result that have appeared in disparate 
venues in studying a broad range of large deviation questions. Some of these results  can be found in a unified way in the recent book \cite{buddupbook}, while others, to date, have been scattered at various places in the literature. The latter category includes, in particular, variational representations,  when the stochastic dynamical system of interest has in addition to a driving L\'{e}vy noise, another source of randomness, e.g. due to a random initial condition; when the functionals of interest depends on infinite-length paths of a L\'{e}vy process; when the noise process is a Gaussian process with long-range dependence, e.g. a fractional Brownian motion, etc. The goal of this survey article is to present these diverse variational formulas in a systematic manner.
\\ \ \\ 

 {\em Keywords.} Variational representations, fractional Brownian motion, Hilbert space valued Brownian motion, L\'{e}vy process, large deviations, Laplace asymptotics.\\ 
\begin{center}
  {\bf {\em In memory of Professor K.R. Parthasarathy.}}
 \end{center}
 }

\section{Introduction}
\label{sec:1}
The paper  \cite{boudup} established a variational formula for positive bounded functionals of paths of a finite dimensional Brownian motion over a compact time interval that has proved to be extremely useful in studying a wide range of large deviation problems. This result, now well nown as the Bou\'{e}-Dupuis variational formula, says the following. Fix $T \in (0,\infty)$.
Let $W = \{W(t): 0 \le t \le T\}$ be a $d$-dimensioanl standard Wiener process on a complete 
probability space $(\Om, \clf, \PP)$.
Denote by $\{\clg_t\}_{0\le t\le T}$ the $\PP$-augmentation of the filtration $\{\sigma\{W(s): 0 \le s \le t\}\}_{0\le t\le T}$. Let $\cla$ denote the class of $\clg_t$-predictable processes $v: [0,T]\times \Om \to \RR^d$ that satisfy $\int_0
^T \|v(t)\|^2 dt <\infty$ a.s.
Let $F: \clc_0^T(\RR^d) \to \RR$ be a bounded measurable map. Here and throughout, for a Polish space $\cle$,  $\clc_0^T(\cle)$ will denote the space of continuous functions from $[0,T]$ to $\cle$ equipped with the uniform topology. Then
\begin{equation}\label{rep:bodu}
-\log \EE \exp\{-F(W)\} = \inf_{v \in \cla} \EE \left[ \frac{1}{2} \int_0^{T} \|v(s)\|^2 ds + F(W + \int_0^{\cdot} v(s) ds)\right].
\end{equation}
In this we article we collect several important extensions of this result which have proved to be useful in a range of problems, including those from infinite dimensional stochastic dynamical systems, stochastic partial differential equations, jump-diffusions and other stochastic systems driven by L\'{e}vy processes, Gaussian processes with long-range dependence, mean-field interacting particle systems, stochastic particle systems on discrete lattices, random graph models, etc.
Several of these extensions can be found in the recent book\cite{buddupbook}, but there are a few others, such as those for  fractional Brownian motions, or those for functionals involving additional randomness than that from a L\'{e}vy process, or representations on an infinite time horizon, which have found use in different problems, that to-date have not been recorded in a single place. Our goal in this survey article is to introduce these various representations in a systematic way and explain the different contexts where they have been used.

The following notation and terminology will be used. 
A filtration  $\{\clf_t\}$ on a complete probability space $(\Om, \clf, \PP)$ is said to satisfy 
{\em the usual conditions} if the filtration is right continuous and $\clf_0$ contains all the $\PP$-null sets. On a filtered probability space $(\Om, \clf, \PP, \{\clf_t\})$ a $d$-dimensional Brownian motion $B$ is said to be a $\{\clf_t\}$- Brownian motion (or a $\{\clf_t\}$- Wiener process) if 
$\{B(t)-B(s): t\ge s\}$ is independent of $\clf_s$ for all $s \ge 0$.
For a random variable $X$ with values in some Polish space $\cle$, $\cll(\cle)$ will denote the probability distribution of $X$ which is an element of $\clp(\cle)$, namely the space of probability measures on $\cle$. Borel $\sigma$-field on $\cle$ will be denoted as $\clb(\cle)$. 
For $\gamma, \theta \in \clp(\cle)$, the relative entropy of $\gamma$ with respect to $\theta$, denoted as $R(\gamma \| \theta)$ is defined to be $\int_{\cle} \log \frac{d\gamma}{d\theta} d\gamma$
when $\gamma \ll \theta$ and $\infty$ otherwise.
For a sequence $\{X_n\}$ of $\cle$ valued random, the convergence of $X_n$ to an $\cle$ valued random variable $X$ in distribution will be denoted as $X_n \Rightarrow X$. Occasionally, to emphasize the dependence on the probability measure, the expected value on a probability space $(\Om, \clf, \PP)$ will be written as $\EE_{\PP}$. For a Hilbert space $(H, \langle \cdot, \cdot \rangle)$, $L^2([0,T]: H)$ will denote the Hilbert space of functions $f: [0,T] \to H$ such that $\int_0^T \|f(s)\|^2 ds <\infty$, where for $x \in H$, $\|x\|^2 = \langle x, x \rangle$. The Hilbert space $L^2([0,\infty): H)$ is defined similarly.
We will frequently use same notation for related but different objects in different sections of the article. For example the class of controls will usually be denoted as $\cla$ but their precise definition will change from section to section.

\section{General Filtrations and Infinite Dimensional Brownian Motions.}
\label{sec:HS}
Results in this section can be found in \cites{buddup3,buddupmar}. They also appear in the recent book 
\cite{buddupbook}. Although Theorem \ref{thm:simp} is not explicitly given in these references, it can be deduced easily from the results therein. We provide proof details for reader's convenience.

The representation given in \ref{rep:bodu} requires the class of controls to be adapted with respect to the (augmented) Brownian filtration. In the study of large deviation problems for mean field interacting particle systems \cites{buddupfis,budfanwu,budcon,budcon2} it is useful to have an extension of the representation which allows for a  filtration that is larger than the Brownian filtration. Such a representation is important in the proof of the lower bound in the large deviation principle for the path occupation measure associated with such interacting particle systems. It turns out that  an optimal point $\mu$ of the rate function, over a given set,
corresponds to the probability law of the state process, in a controlled stochastic system,  which   apriori is not known to be adapted to the filtration generated by the driving noise. This subtle but crucial issue requires a variational representation that allows for a larger filtration.

An extension in a different direction is motivated by systems driven by  infinite dimensional Brownian motions that are the basic models in the field of  stochastic partial differential equations. In order to study large deviation properties of such systems, it becomes important to establish a representation that is applicable to the various  models of infinite dimensional  Brownian motions, such as a Hilbert space valued Brownian motion, a cylindrical Brownian motion, space-time Brownian sheet, etc.

A representation that allows for both of these features was obtained in \cites{buddup3,buddupmar}. We only present the representation in the setting of a Hilbert space valued Brownian motion and refer the reader to \cite{buddupbook} for the other related settings.

We begin with some basic definitions. Fix $T<\infty$.
Let $(\Om, \clf, \PP)$ be a complete probability space on which is given a filtration $\{\clf_t\}_{0\le t \le T}$ satisfying the usual conditions. Let $(H, \langle \cdot, \cdot\rangle)$ be a real separable Hilbert space and let $Q$ be a strictly positive, symmetric, trace class operator on $H$. We recall the definition of a Hilbert space valued Wiener process.
\begin{definition}\label{def:hbm}
	A continuous $H$-valued stochastic process $\{W(t)\}_{0\le t \le T}$ is called a $Q$-Wiener process with respect to the filtration $\{\clf_t\}_{0\le t \le T}$ if the following hold:
	\begin{enumerate*}[label=(\roman*)]
  \item For every nonzero $h \in H$, $\langle Qh, h\rangle^{-1/2} \langle W(t), h\rangle$ is a one dimensional standard Wiener process.
  \item For every $h \in H$, $\langle W(t), h\rangle$ is an $\clf_t$-martingale.
\end{enumerate*}
\end{definition}
Define $H_0 \doteq Q^{1/2}H$. Then $H_0$ is a Hilbert space with inner product defined as
$$\langle h, k\rangle_0 \doteq \langle Q^{-1/2} h, Q^{-1/2} k\rangle,  \mbox{ for } h, k \in H_0.$$
Denote by $\tilde \cla$ the collection of all $\clf_t$-predictable $u:[0,T]\times \Om \to H_0$ that satisfy $\int_0^T \|u(t)\|_0^2 dt <\infty$. Also denote by $\clg_t$ the $\PP$-augmentation of the Brownian filtration $\sigma\{W(s): 0 \le s \le t\}$ and let $\cla$ be the subcollection of $\tilde \cla$ consisting of $\clg_t$-predictable $H_0$ valued processes. Then the following representation can be found in \cites{buddup3,buddupmar,buddupbook}. 
\begin{theorem}\label{thm:hilsp}
	Let $F:\clc_0^T(H) \to \RR$ be a bounded measurable map. Then 
	\begin{equation}\label{rep:hilsp}
-\log \EE \exp\{-F(W)\} = \inf_{v \in \clr} \EE \left[ \frac{1}{2} \int_0^{T} \|v(s)\|_0^2 ds + F(W + \int_0^{\cdot} v(s) ds)\right],
\end{equation}
where $\clr \in \{\cla, \tilde \cla\}$.
\end{theorem}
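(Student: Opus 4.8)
The plan is to establish the two inequalities that jointly pin down both infima. Since $\cla\subseteq\tilde\cla$, the infimum over $\tilde\cla$ never exceeds that over $\cla$; hence it suffices to prove the \emph{lower bound} $-\log\EE e^{-F(W)}\le\inf_{v\in\tilde\cla}(\cdots)$ for the larger class, together with the \emph{upper bound} $-\log\EE e^{-F(W)}\ge\inf_{v\in\cla}(\cdots)$ realized by near-optimal controls adapted to the smaller Brownian filtration $\clg_t$. These two bounds force $\inf_{\cla}=\inf_{\tilde\cla}=-\log\EE e^{-F(W)}$ and prove the claim for both choices of $\clr$.

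The backbone of the argument is a spectral coordinatization that reduces matters to the finite-dimensional Bou\'e--Dupuis representation with general filtrations established in \cites{buddup3,buddupmar}. Diagonalizing the trace class operator $Q$ as $Qe_i=\lambda_ie_i$ with $\lambda_i>0$ and $\sum_i\lambda_i<\infty$, the processes $\beta_i\doteq\lambda_i^{-1/2}\langle W,e_i\rangle$ are i.i.d.\ standard real $\{\clf_t\}$-Brownian motions and $W=\sum_i\sqrt{\lambda_i}\,\beta_i e_i$. Writing $u_i\doteq\lambda_i^{-1/2}\langle v,e_i\rangle$ for $v\in\tilde\cla$, one has $\|v\|_0^2=\sum_i u_i^2$ and the $i$-th coordinate of $W+\int_0^\cdot v\,ds$ equals $\sqrt{\lambda_i}\,(\beta_i+\int_0^\cdot u_i\,ds)$, so the representation is equivalent to the analogous statement for the countable family $(\beta_i)$ with cost $\tfrac12\int_0^T\sum_i u_i^2\,ds$. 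I would then approximate $F$ by finitely-based bounded functionals $F_n$ depending only on $(\beta_1,\dots,\beta_n)$ and converging to $F(W)$ (for instance conditional expectations given the augmented filtration $\clg^n_T$ of the first $n$ coordinates), noting $\|F_n\|_\infty\le\|F\|_\infty$ and $\EE e^{-F_n}\to\EE e^{-F(W)}$ by dominated convergence.

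For the lower bound I would argue directly, without truncation of the noise. Fix $v\in\tilde\cla$; after a standard truncation of $v$ (using boundedness of $F$ to control the error) one may apply Girsanov's theorem for the $H$-valued $Q$-Wiener process to conclude that the law $\mu^v\doteq\cll(W+\int_0^\cdot v\,ds)$ on $\clc_0^T(H)$ satisfies $R(\mu^v\|\theta)\le\EE[\tfrac12\int_0^T\|v\|_0^2\,ds]$, where $\theta\doteq\cll(W)$ is the Wiener measure. Combining this with the Gibbs variational inequality $-\log\EE_\theta e^{-F}\le\EE_\gamma[F]+R(\gamma\|\theta)$, applied with $\gamma=\mu^v$, yields $-\log\EE e^{-F(W)}\le\EE[\tfrac12\int_0^T\|v\|_0^2\,ds+F(W+\int_0^\cdot v\,ds)]$, which is the lower bound over all of $\tilde\cla$.

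The upper bound is where the real work lies. For each $n$ I would apply the finite-dimensional representation of \cites{buddup3,buddupmar}, in its Brownian-adapted form, to $F_n$ viewed as a bounded functional of the $\RR^n$-valued Brownian motion $(\beta_1,\dots,\beta_n)$, producing near-optimal $\clg^n_t$-adapted controls $u^n=(u^n_1,\dots,u^n_n)$; extending by zero in the remaining coordinates and setting $v^n\doteq\sum_{i\le n}\sqrt{\lambda_i}\,u^n_i e_i$ gives elements of $\cla$. The main obstacle is the passage to the limit: one must show that the cost of $v^n$ for the \emph{original} functional $F$ converges to $-\log\EE e^{-F(W)}$, even though both the integrand $F_n$ and the control $v^n$ vary with $n$. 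Here the crucial ingredient is the a priori energy bound $\tfrac12\EE\int_0^T\|v^n\|_0^2\,ds\le 2\|F\|_\infty+1$ available for near-optimizers (a consequence of $-\|F\|_\infty\le\EE F(\cdots)$ and $-\log\EE e^{-F}\le\|F\|_\infty$), which, via a Girsanov change of measure removing the drift together with de la Vall\'ee-Poussin/uniform-integrability estimates, controls the difference $\EE[F(X^{v^n})]-\EE[F_n(X^{v^n})]$ and lets one replace $F_n$ by $F$ in the limit. Verifying this uniform integrability of the change-of-measure densities, and checking that truncation and zero-extension of controls neither increase cost nor destroy $\clg_t$-adaptedness, is the technical heart of the proof; the lower bound, by contrast, is routine once Girsanov in Hilbert space is available.
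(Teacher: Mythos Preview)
The paper does not give its own proof of this theorem: it is stated as a result that ``can be found in \cites{buddup3,buddupmar,buddupbook}''. Your outline is essentially the argument of \cite{buddup3}: the spectral coordinatization $W=\sum_i\sqrt{\lambda_i}\beta_i e_i$ reducing to a countable family of independent real Brownian motions, the Girsanov/Donsker--Varadhan inequality for one direction, and for the other direction the approximation of $F$ by functionals $F_n$ depending on finitely many coordinates, combined with the uniform relative-entropy bound $R(\theta_n\|\theta_0)\le 2\|F\|_\infty+1$ for near-optimizers to pass from $F_n$ back to $F$ (exactly the device the paper spells out later, in its proof of Theorem~\ref{thm:infhor}, via \cite[Lemma 2.5]{buddupbook}). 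So your proposal is correct and matches the approach of the cited references; there is nothing in the present paper to compare against beyond that.

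One small terminological point: your labels ``lower bound'' and ``upper bound'' are reversed relative to the convention used elsewhere in the paper (see the proof of Theorem~\ref{thm:infhor}, where \eqref{eq:uppbd} denotes $-\log\EE e^{-F}\le\inf(\cdots)$ and \eqref{eq:lowbd} the opposite inequality). The mathematics is unaffected, but aligning the naming would avoid confusion.
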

Thus the representation in particular says that the infimum on the right side taken over the larger collection $\tilde \cla$ is same as that over the subcollection $\cla$.
In fact the infimum can be taken over a much more smaller subcollection, a fact which is very useful when one would like to use weak convergence arguments in proofs of large deviation principles. To describe this subcollection, let for $M \in (0, \infty)$, define
$$S_M \doteq \left\{u \in L^2([0,T]:H_0): \int_0^T \|u(s)\|_0^2 ds \le M\right\}.$$
Denote by $\cla_{b,M}$ (resp. $\tilde \cla_{b,M}$) the subcollection of $\cla$ (resp. $\tilde \cla$) consisting of $u$ that take values in $S_M$. Let $\cla_b \doteq \cup_{M>0}\cla_{b,M}$
and $\tilde \cla_b \doteq \cup_{M>0}\tilde\cla_{b,M}$. Then \cite[Theorem 8.3]{buddupbook} shows that
\eqref{rep:hilsp} in fact holds for any $\clr \in \{\cla, \tilde \cla, \cla_b, \tilde \cla_b\}$.

One can take the infimum over even a further smaller class, namely the collection of simple processes. This fact was crucially exploited in \cite{banbudper} in the proof of the large deviation principle for a Brownian interacting particle system with local interactions. This work, in the proof of the large deviation upper bound, required certain estimates on Dirichlet forms associated with the controlled state processes, which in turn relied on the smoothness properties of the density functions of the laws of these controlled processes. The proof of these regularity properties given in \cite{banbudper} made key use of the fact that the controls are piecewise constant. Thus the result that in the variational representation the infimum can be taken over such controls played a central role in the proof.

We now present this result.
\begin{definition}
	A process $v \in \tilde \cla$ (resp. $\cla$) is called simple, if there exists a  $k \in \NN$, $N \in \NN$ and 
	$0= t_1\le t_2\le \cdots \le t_{k+1}=T$, such that 
	$$v(s,\om) \doteq \sum_{j=1}^k X_j(\om) \one_{(t_j, t_{j+1}]}(s), \; (s,\om) \in [0,\infty)\times \Om,$$
	where, for each $j = 1, \ldots , k$, $X_j$ is a real $\clf_{t_j}$ (resp. $\clg_{t_j}$) measurable random variable satisfying $|X_j|\le N$. 
	We denote the collection of all such simple processes as $\tilde \cla_s$ (resp. $\cla_s$).
\end{definition}
The following result says that the infimum in \eqref{rep:hilsp} in fact holds for any $\clr \in \{\cla, \tilde \cla, \cla_b, \tilde \cla_b, \cla_s, \tilde \cla_s\}$. Although the result follows readily from the results in \cite{buddup3}, below we give a brief argument for reader's convenience.
\begin{theorem}\label{thm:simp}
	Let $F:\clc_0^T(H) \to \RR$ be a bounded measurable map. Then 
	\begin{equation}\label{rep:hilsp2}
-\log \EE \exp\{-F(W)\} = \inf_{v \in \clr} \EE \left[ \frac{1}{2} \int_0^{T} \|v(s)\|_0^2 ds + F(W + \int_0^{\cdot} v(s) ds)\right],
\end{equation}
where $\clr \in \{\cla_s, \tilde \cla_s\}$.
\end{theorem}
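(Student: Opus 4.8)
\emph{Proof proposal.} Write $J(v)$ for the expectation appearing on the right-hand side of \eqref{rep:hilsp2}, and set $V \doteq -\log \EE \exp\{-F(W)\}$. Since $\cla_s \subseteq \cla_b$ and $\tilde\cla_s \subseteq \tilde\cla_b$, and since \eqref{rep:hilsp} has already been recorded to hold for $\clr \in \{\cla_b,\tilde\cla_b\}$, an infimum over a subcollection can only be larger, so
\[ \inf_{v\in\cla_s}J(v)\;\ge\;\inf_{v\in\cla_b}J(v)\;=\;V, \]
and likewise $\inf_{v\in\tilde\cla_s}J(v)\ge V$. Hence the whole content of the theorem is the reverse inequality, and for this it suffices to establish the approximation claim: for every $u\in\cla_b$ (resp.\ $u\in\tilde\cla_b$) and every $\eta>0$ there is a simple process $v\in\cla_s$ (resp.\ $v\in\tilde\cla_s$) with $J(v)\le J(u)+\eta$. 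Granting this and choosing $u$ with $J(u)\le V+\epsilon$ yields $\inf_{v\in\cla_s}J(v)\le V$, which together with the display gives the theorem.

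To prove the claim I would approximate $u$ by simple processes in two stages, preserving adaptedness and the coefficient measurability constraint at each step. First I would mollify in time by setting $u_\delta(t)\doteq \delta^{-1}\int_{(t-\delta)\vee 0}^t u(s)\,ds$; when $u$ takes values in $S_M$ this process is continuous in $t$, is $\clf_t$- (resp.\ $\clg_t$-) adapted and hence predictable, is bounded in $H_0$-norm by $\sqrt{M/\delta}$, satisfies $\int_0^T\|u_\delta(t)\|_0^2\,dt\le\int_0^T\|u(t)\|_0^2\,dt$, and converges to $u$ in $L^2([0,T]:H_0)$ almost surely as $\delta\downarrow 0$ by the Lebesgue differentiation theorem applied pathwise. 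Second, for a partition $0=t_0^n<\cdots<t_{k_n}^n=T$ of vanishing mesh I would discretize by left-endpoint values, $v^n_\delta(s)\doteq u_\delta(t^n_j)$ for $s\in(t^n_j,t^n_{j+1}]$. Because $u_\delta(t^n_j)$ is $\clf_{t^n_j}$- (resp.\ $\clg_{t^n_j}$-) measurable and bounded, $v^n_\delta$ lies in $\tilde\cla_s$ (resp.\ $\cla_s$), and by time-continuity of $u_\delta$ it converges to $u_\delta$ uniformly, hence in $L^2([0,T]:H_0)$, as $n\to\infty$; one also has $\int_0^T\|v^n_\delta\|_0^2\,dt\to\int_0^T\|u_\delta\|_0^2\,dt\le M$, so the total energies stay bounded by some fixed $M+1$ along the construction. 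A diagonal choice $v^m\doteq v^{n_m}_{\delta_m}$ then gives simple processes with $v^m\to u$ in $L^2(dt\otimes\PP;H_0)$ while $\int_0^T\|v^m\|_0^2\,dt\le M+1$.

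It then remains to check $J(v^m)\to J(u)$. The cost term is immediate, since $L^2(dt\otimes\PP;H_0)$ convergence of the controls gives $\EE\int_0^T\|v^m(s)\|_0^2\,ds\to\EE\int_0^T\|u(s)\|_0^2\,ds$. For the term $\EE F\big(W+\int_0^\cdot v^m\,ds\big)$ one notes that the $L^2$-convergence of the controls, via the continuous embedding $H_0\hookrightarrow H$ coming from boundedness of $Q^{1/2}$, forces the controlled paths $W+\int_0^\cdot v^m\,ds$ to converge to $W+\int_0^\cdot u\,ds$ uniformly on $[0,T]$, i.e.\ in $\clc_0^T(H)$. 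If $F$ were continuous this, with boundedness, would close the argument by dominated convergence. The main obstacle is that $F$ is only bounded \emph{measurable}, so pathwise convergence of the arguments does not transfer to the $F$-values.

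To overcome this I would pass to the laws on $\clc_0^T(H)$. Since each $v^m$ and $u$ are adapted with $\int_0^T\|\cdot\|_0^2\,dt$ bounded by $M+1$, the Cameron--Martin--Girsanov theorem in the Hilbert-space setting shows that the law $\mu^{v}$ of $W+\int_0^\cdot v\,ds$ is absolutely continuous with respect to the law $\mu_W$ of $W$, with a stochastic-exponential density; moreover the deterministic energy bound makes these densities bounded in $L^2(\mu_W)$, hence uniformly integrable, and $v^m\to u$ in $L^2(dt\otimes\PP;H_0)$ forces the corresponding densities to converge in $L^1(\mu_W)$. Since $F$ is bounded, this yields $\EE F\big(W+\int_0^\cdot v^m\big)=\int F\,d\mu^{v^m}\to\int F\,d\mu^{u}=\EE F\big(W+\int_0^\cdot u\big)$, proving the claim. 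These are precisely the Girsanov estimates developed in \cite{buddup3}, which is why the result follows readily from that reference; the arguments for $\cla_s$ and $\tilde\cla_s$ are identical, differing only in whether the simple-process coefficients are required to be $\clg_{t_j}$- or $\clf_{t_j}$-measurable.
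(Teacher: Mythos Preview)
Your argument is sound and gives the result, but it proceeds by a genuinely different route from the paper. You approximate the \emph{control}: mollify in time, then discretize, producing simple $v^m$ that converge to a given $u\in\cla_b$ (or $\tilde\cla_b$) in $L^2(dt\otimes\PP;H_0)$ with a uniform energy bound, and then pass to the limit in $J$ through a Girsanov/uniform-integrability argument that handles bounded measurable $F$ in one stroke. The paper instead separates the two difficulties. It first invokes \cite[Lemma~3.5]{buddup3}, which already gives $\inf_{\cla_s}J=\inf_{\tilde\cla}J$ for \emph{continuous} bounded $F$; combined with Theorem~\ref{thm:hilsp} this settles the continuous case. The extension to measurable $F$ is then done by approximating the \emph{functional} rather than the control: choose continuous $F_n\to F$ $\mu_W$-a.s.\ with $\|F_n\|_\infty\le\|F\|_\infty$, fix a single $\veps$-optimal $v^*\in\tilde\cla_b$ for $F$, use the continuous case to find simple $v_n\in\cla_s$ nearly optimal for $F_n$, and close with \cite[Lemma~2.5]{buddupbook} (uniform relative-entropy bound) to replace $F_n$ by $F$ along the sequence $v_n$. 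Your route is more self-contained and yields an explicit simple approximation of any bounded-energy control; the paper's route is shorter once the cited lemmas are available and cleanly decouples ``simple controls suffice'' from ``measurable $F$ is allowed''. One small caveat on your last step: strong $L^1(\mu_W)$-convergence of the path-space densities $d\mu^{v^m}/d\mu_W$ is not entirely immediate from $L^2(dt\otimes\PP)$-convergence of the controls; what your setup does yield directly is a uniform $L^2(\mu_W)$-bound on the densities together with weak convergence $\mu^{v^m}\Rightarrow\mu^{u}$, hence weak-$L^1$ convergence of the densities, and that already suffices to integrate the bounded measurable $F$. This is essentially the content of \cite[Lemma~2.5]{buddupbook} that the paper invokes.
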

\begin{proof}
	First suppose that $F:\clc_0^T(H) \to \RR$ is a continuous and bounded map. In that case, from
	\cite[Lemma 3.5]{buddup3}, we have that
	$$\inf_{v \in \cla_s} \EE \left[ \frac{1}{2} \int_0^{T} \|v(s)\|_0^2 ds + F(W + \int_0^{\cdot} v(s) ds)\right] = \inf_{v \in \tilde\cla} \EE \left[ \frac{1}{2} \int_0^{T} \|v(s)\|_0^2 ds + F(W + \int_0^{\cdot} v(s) ds)\right]$$
	and so the statement in the theorem follows from Theorem \ref{thm:hilsp}. Now consider a general real bounded measurable map $F$ on $\clc_0^T(H)$. Then there is a sequence of continuous maps $F_n: \clc_0^T(H) \to \RR$ such that $\|F_n\|_{\infty} \le \|F\|_{\infty} <\infty$ and $F_n \to F$ $\PP$-a.s.
	Fix $\veps>0$ and choose $v^* \in \tilde \cla_b$ such that
	 $$\EE \left[ \frac{1}{2} \int_0^{T} \|v^*(s)\|_0^2 ds + F(W + \int_0^{\cdot} v^*(s) ds)\right]\le \inf_{v \in \tilde\cla_b} \EE \left[ \frac{1}{2} \int_0^{T} \|v(s)\|_0^2 ds + F(W + \int_0^{\cdot} v(s) ds)\right]+ \veps.$$
	 Since $v^* \in \tilde \cla_b$, the law of $W+\int_0^{\cdot} v^*(s) ds$ is mutually absolutely continuous with respect to that of $W$ and so $F_n(W+\int_0^{\cdot} v^*(s) ds)$ converges a.s. to
	 $F(W+\int_0^{\cdot} v^*(s) ds)$. Thus 
	 \begin{align*}
	 &\lim_{n\to \infty}\EE \left[ \frac{1}{2} \int_0^{T} \|v^*(s)\|_0^2 ds + F_n(W + \int_0^{\cdot} v^*(s) ds)\right]\\ 
	 & \le \inf_{v \in \tilde\cla_b} \EE \left[ \frac{1}{2} \int_0^{T} \|v(s)\|_0^2 ds + F(W + \int_0^{\cdot} v(s) ds)\right]+ \veps.
	 \end{align*}
	 Using the first part of the proof, choose $v_n \in \cla_s$ such that
	 \begin{align*}
	 	&\EE \left[ \frac{1}{2} \int_0^{T} \|v_n(s)\|_0^2 ds + F_n(W + \int_0^{\cdot} v_n(s) ds)\right]\\ 
	 	&\le \EE\left[ \frac{1}{2} \int_0^{T} \|v^*(s)\|_0^2 ds + F_n(W + \int_0^{\cdot} v^*(s) ds)\right] + n^{-1}.
	 \end{align*}
	 From \cite[Lemma 2.5]{buddupbook} it follows that
	 $$\lim_{n\to \infty}\EE\left|F_n(W + \int_0^{\cdot} v_n(s) ds) - F(W + \int_0^{\cdot} v_n(s) ds)\right| = 0.$$
	 Combining the last three displays we have 
	 \begin{align*}
	 	&\inf_{v\in \cla_s}\EE\left[ \frac{1}{2} \int_0^{T} \|v(s)\|_0^2 ds + F(W + \int_0^{\cdot} v(s) ds)\right]\\ 
	 	&\le \lim_{n\to \infty}\EE \left[ \frac{1}{2} \int_0^{T} \|v_n(s)\|_0^2 ds + F(W + \int_0^{\cdot} v_n(s) ds)\right]\\ 
	 	&\le \lim_{n\to \infty}\EE\left[ \frac{1}{2} \int_0^{T} \|v^*(s)\|_0^2 ds + F_n(W + \int_0^{\cdot} v^*(s) ds)\right]\\ 
	 	&\le \inf_{v \in \tilde\cla_b} \EE \left[ \frac{1}{2} \int_0^{T} \|v(s)\|_0^2 ds + F(W + \int_0^{\cdot} v(s) ds)\right]+ \veps .
	 \end{align*}
	 Since $\veps>0$ is arbitrary, the result follows.
\end{proof}

\section{Functionals of Brownian Motions with Additional Randomness}
\label{sec:extran}
In some situations one is interested in studying the large deviations behavior of a stochastic dynamical system for which, in addition to a random driving noise, there is another source of randomness, coming e.g. from the initial  state of the stochastic system. Two such settings were studied in \cites{budfanwu,banbudper}. 
In order to motivate the extension considered in this section, we describe the setting in \cite{budfanwu} in some detail.

\subsection{Brownian particle systems with killing.}
 Let $\{X_i\}_{i \ge 1}$ be a sequence of i.i.d.\ exponential random variables with rate $1$ and let $\{B_i(t), t \ge 0\}_{i \ge 1}$ be independent $d$-dimensional standard Brownian motions independent of $\{X_i\}_{i \ge 1}$. 
Define for $t \ge 0$ the random sub-probability measure $\mu^n(t)$ as the solution to the following equation
\begin{equation}
	\label{eq_kill:mu^n(t)_intro}
	\mu^n(t) = \frac{1}{n} \sum_{i=1}^n \delta_{B_i(t)} \one_{\left\{ X_i > \int_0^t \langle\zeta,\,\mu^n(s)\rangle\,ds\right\}}.
\end{equation}
Here $\zeta: \RR^d \to \RR$ is a continuous function with sub-quadratic growth.
Since a.s., we can enumerate $\{X_i\}_{i=1}^n$ in a strictly increasing order, the unique solution of \eqref{eq_kill:mu^n(t)_intro} can be written explicitly in a recursive manner.
It can be checked (see \cite[Theorem 2.1]{budfanwu}) that $\mu^n \doteq \{\mu^n(t)\}_{t\in [0,T]}$ converges, in the Skorokhod path space $\cld \doteq \DD([0,T]: \clm(\RR^d))$, where $\clm(\RR^d)$ is the space of sub-probability measures on $\RR^d$ equipped with the weak convergence topology, in probability
to $\mu$ where for $t>0$, $\mu(t)$ has density $u(t, \cdot)$ given as the solution of 
\begin{equation}
	\label{eq_kill:PDE}
	\frac{\partial u(t,x)}{\partial t}=\frac{1}{2}\Delta u(t,x) - \langle\zeta,\,u\rangle\,u(t,x),\; (t,x) \in (0,\infty)\times \RR^d,
	\; \lim_{t\downarrow 0}u(t,\cdot) = \delta_0(\cdot),
\end{equation}
Such particle systems are motivated by problems in biology, ecology, chemical kinetics, etc. For example, the simplest case 
where $\zeta \equiv 1$ corresponds to the case where the killing rate is proportional to the total number of particles alive and models a setting in which particles compete for a common resource. More general functions $\zeta$
are of interest as well and one interpretation of $\zeta(x)$ is the amount of resource consumed by  a particle in state $x$.
Similar particle systems arise in problems of mathematical finance as models for self exciting correlated defaults. 

In studying large deviation properties of the sequence $\{\mu^n\}$ of $\cld$ valued random variables one needs to understand the asymptotics of Laplace functionals of the form
$$-\frac{1}{n} \log \EE[\exp(-nf(\mu^n))]$$
for real continuous and bounded functions $f$ on $\cld$.
In view of the weak convergence approach to the study of large deviation problems\cites{dupell4,buddupbook}, it is then natural  to derive a suitable controlled representation for the above Laplace functional and understand tightness and weak convergence properties of the various terms in this representation. Note that $\mu^n$ can be viewed as a measurable functional of the $nd$-dimensional Brownian motion $\mathbf{B}^n = (B_1, \ldots B_n)$ and an independent $\RR_+^n$ dimensional random variable 
$\mathbf{X}^n= (X_1, \ldots, X_n)$ and so one can write 
$$-\frac{1}{n} \log \EE[\exp(-nF_n(\mathbf{B}^n, \mathbf{X}^n))]$$
where $F_n$ is a suitable real bounded and measurable map on $\clc_0^T(\RR^{nd})\times \RR_+^n$.
In the case where $F_n$ only depends on $\mathbf{B}^n$, we saw that useful variational representations are given by \eqref{rep:bodu} and Theorem \ref{thm:hilsp}.
However the setting where the function $F_n$ also depends on $\mathbf{X}^n$ is not covered by the above results.

For such settings one needs a variational representation that allows for functionals that, in addition to depending on a Brownian motion, depend also on an additional source of randomness.  Such a representation was given in \cite[Proposition 4.1]{budfanwu}. We present this result below. 

\subsection{A Variational Representation with Additional Randomness.}
Let $T<\infty$, and $(\Omega,\clf,\{\clf_t\}_{0\le t \le T},\PP)$ be a  filtered probability space as in the previous section on which  are given a $d$-dimensional standard $\clf_t$-Brownian motion $W$ and an $\clf_0$-measurable random variable $X$, which takes values in a Polish space $\Smb$ and has probability law $\rho$.
We present below a  variational representation for $-\log \EE \left[\exp \left(-F(W,X)\right) \right]$, where $F$ is a real bounded measurable map on $\clc_0^T(\RR^d) \times \Smb$.
Note that since $X$ is $\clf_0$-measurable, $W$ and $X$ are independent.

Consider the probability space $(\bar\Omega,\bar\clf,\bar\PP)$ on which we are given 
a $d$-dimensional standard Brownian motion $\bar W$ and an $\Smb$-valued random variable $\bar X$, which is independent of $\bar W$, with law $\Pi$.
Let $\{\hat\clf_t\}$ be  any filtration on $(\bar\Omega,\bar\clf,\bar\PP)$ satisfying the usual conditions, such that $\bar W$ is still a standard $d$-dimensional Brownian motion with respect to $\{\hat\clf_t\}$ and $\bar X$ is 
$\hat \clf_0$-measurable.
One example of such a filtration is $\{\bar \clf_t \doteq \sigma\{\bar \clf_t^{\bar W,\bar X} \cup 
\bar\cln\}\}$, where $\bar\cln$ is the collection of all $\bar \PP$-null sets and $\bar \clf_t^{\bar W,\bar X} \doteq \sigma\{\bar X, \bar W(s):s \le t\}$.
Let $\Upsilon_{\Pi} \doteq (\bar \Omega,\bar \clf,\{\hat\clf_t\},\bar\PP)$ and consider the following collection of processes
\begin{align*}
	\cla(\Upsilon_{\Pi}) \doteq \left\{ u : \mbox{the process } u \mbox{ is }  \hat\clf_t \mbox{-predictable }  \mbox{and } \bar \EE \int_0^T \|u(s)\|^2 \,ds < \infty\right\}.
\end{align*}
The following is \cite[Proposition 4.1]{budfanwu}.
\begin{theorem}
	\label{prop_kill:rep_general}
	Let $F$ be a real bounded measurable map on $\clc_0^T(\RR^d) \times \Smb$. Then
	\begin{align*} 
		& -\log \EE \left[\exp \left(-F(W,X)\right) \right]  \\
		& \quad = \inf_{\Pi, \Upsilon_{\Pi}}\inf_{u \in \cla(\Upsilon_{\Pi})} \left\{ R(\Pi\|\rho) 
		+ \bar \EE \left[ \frac{1}{2} \int_0^T \|u(s)\|^2 \, ds + F\left(\bar W + \int_0^\cdot u(s) \, ds, 
		\bar X \right) \right] \right\}, 
	\end{align*}
	where the outer infimum is over all $\Pi \in \clp(\Smb)$ and all systems $\Upsilon_{\Pi}$. 
\end{theorem}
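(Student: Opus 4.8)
The plan is to factor the problem into its Brownian and extra-randomness parts and then glue the two together. Since $X$ is $\clf_0$-measurable and $W$ is an $\clf_t$-Brownian motion, $W$ and $X$ are independent, so conditioning on $X$ gives
\[
\EE[\exp(-F(W,X))] = \int_{\Smb} \EE[\exp(-F(W,x))]\,\rho(dx) = \int_\Smb \exp(-G(x))\,\rho(dx),
\]
where $G(x) \doteq -\log\EE[\exp(-F(W,x))]$. Because $F$ is bounded and measurable, Fubini shows that $x\mapsto \EE[\exp(-F(W,x))]$ is measurable and bounded away from $0$, so $G$ is a bounded measurable map on $\Smb$. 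Two classical facts then drive the proof: the Donsker--Varadhan formula
\[
-\log\int_\Smb \exp(-G(x))\,\rho(dx) = \inf_{\Pi\in\clp(\Smb)}\Big\{R(\Pi\|\rho) + \int_\Smb G(x)\,\Pi(dx)\Big\},
\]
and, for each fixed $x$, the finite-dimensional representation obtained from Theorem \ref{thm:hilsp} with $H=\RR^d$,
\[
G(x) = \inf_{v}\EE\Big[\tfrac12\int_0^T\|v(s)\|^2\,ds + F\big(W+\textstyle\int_0^\cdot v(s)\,ds,\,x\big)\Big],
\]
the infimum being the same whether $v$ ranges over Brownian-adapted controls or over controls adapted to any larger filtration for which $W$ remains a Brownian motion. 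The target identity is exactly the composition of these two formulas, once the infimum over controls is moved inside the integral against $\Pi$.

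For the inequality ``$\le$'' I would fix an arbitrary $\Pi$, a system $\Upsilon_\Pi$, and $u\in\cla(\Upsilon_\Pi)$, and disintegrate with respect to $\bar X$. Since $\bar X$ is $\hat\clf_0$-measurable and $\bar W$ is an $\{\hat\clf_t\}$-Brownian motion, $\bar W$ is independent of $\bar X$, so under the regular conditional law $\bar\PP(\cdot\mid\bar X=x)$ the process $\bar W$ is still a standard Brownian motion and $u$ becomes a control adapted to a filtration making $\bar W$ a Brownian motion. The general-filtration part of Theorem \ref{thm:hilsp} then yields
\[
\bar\EE\Big[\tfrac12\int_0^T\|u(s)\|^2\,ds + F\big(\bar W+\textstyle\int_0^\cdot u,\,\bar X\big)\,\Big|\,\bar X=x\Big] \ge G(x)
\]
for $\Pi$-a.e.\ $x$. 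Integrating against $\Pi$, adding $R(\Pi\|\rho)$, and invoking the Donsker--Varadhan lower bound gives $R(\Pi\|\rho)+\bar\EE[\cdots]\ge -\log\int_\Smb e^{-G}\,d\rho$, which is the left-hand side; taking the infimum over $\Pi,\Upsilon_\Pi,u$ completes this direction.

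For the reverse inequality ``$\ge$'' I would first pick the explicit minimizer $\Pi^*$ of the Donsker--Varadhan problem, namely $d\Pi^*/d\rho = e^{-G}/\int e^{-G}\,d\rho$, which is admissible because $G$ is bounded and achieves $R(\Pi^*\|\rho)+\int G\,d\Pi^* = -\log\int e^{-G}\,d\rho$. Next, for each $x$ and each $\veps>0$, the fixed-$x$ representation provides a control that is $\veps$-optimal for $G(x)$; the crux is to choose these controls \emph{jointly measurably} in $x$, producing a single process $u(s,\omega,x)=v^x(s,\omega)$ that is predictable with respect to a filtration carrying both $\bar W$ and $\bar X$. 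Granting such a selection, I would build the canonical product system $\bar\Omega=\clc_0^T(\RR^d)\times\Smb$ carrying $\bar W$ and an independent $\bar X\sim\Pi^*$, with $\hat\clf_t$ the augmentation of $\sigma(\bar X,\bar W(r):r\le t)$; then $u\in\cla(\Upsilon_{\Pi^*})$ and, by Fubini, $\bar\EE[\cdots]\le \int_\Smb G\,d\Pi^* + \veps$, whence $R(\Pi^*\|\rho)+\bar\EE[\cdots]\le -\log\int e^{-G}\,d\rho+\veps$. Letting $\veps\downarrow 0$ finishes the proof.

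The main obstacle is precisely the joint measurability of the near-optimal controls $x\mapsto v^x$ used in the last step. I would handle it either by a measurable-selection theorem applied to the measurably parametrized family of cost functionals, or, more constructively, by using the Girsanov-based near-optimal drifts underlying the Bou\'{e}--Dupuis proof, which are given through conditional expectations of $F(\cdot,x)$ and hence depend measurably on $x$ by construction; replacing $v^x$ by its $\cla_s$-approximation from Theorem \ref{thm:simp} keeps the parametrization elementary. Every other step --- the disintegration in direction ``$\le$'', the verification that $\bar W$ survives the conditioning, and the Fubini computation in direction ``$\ge$'' --- is routine once this measurable dependence is secured.
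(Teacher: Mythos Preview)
Your proposal is correct and follows essentially the same route as the paper's proof sketch: factor through independence to reduce to $-\log\int_\Smb e^{-G}\,d\rho$, apply the Donsker--Varadhan formula in the $\Smb$-variable and the Bou\'{e}--Dupuis representation pointwise in $x$, and then confront the interchange of the infimum over controls with the integral over $\Pi$. The paper likewise identifies this interchange as the crux and says it is handled by ``discretization and measurable selection arguments'' (deferring details to \cite{budfanwu}), which is exactly the mechanism you single out; your suggestion to use either a measurable-selection theorem or the explicit Girsanov-based near-optimizers, combined with the simple-control approximation of Theorem~\ref{thm:simp}, is in the same spirit.
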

\begin{remark}
	Denote for $M \in (0,\infty)$, by $\cla_{b,M}(\Upsilon_{\Pi})$ the subcollection of
	$\cla(\Upsilon_{\Pi})$ consisting of controls $u$ that take value in $S_M$. Let 
	$\cla_{b}(\Upsilon_{\Pi}) \doteq \cup_{M>0}\cla_{b,M}(\Upsilon_{\Pi})$. Also, let
$\cla_s(\Upsilon_{\Pi})$ be the subcollection of
	$\cla(\Upsilon_{\Pi})$ consisting of $\hat \clf_t$-adapted simple processes. Then the collection
	$\cla(\Upsilon_{\Pi})$ in the statement of Theorem \ref{prop_kill:rep_general} can be replaced by either $\cla_{b}(\Upsilon_{\Pi})$ or $\cla_s(\Upsilon_{\Pi})$.

	Furthermore, a similar representation can be written for an infinite dimensional Brownian motion, e.g. for a $H$-valued Wiener process as in Section \ref{sec:HS}.
\end{remark}

For details of the proof of Theorem \ref{prop_kill:rep_general} we refer the reader to \cite{budfanwu} but we make some comments on the proof idea.

Consider the probability space $(\tilde \Om, \tilde \clf, \tilde \PP)$, where $\tilde\Omega = \clc_0^T(\RR^d)$, $\tilde \clf = \clb(\clc_0^T(\RR^d))$ and $\tilde\PP$ is the $d$-dimensional Wiener measure.
Namely, under $\tilde\PP$ the canonical coordinate process
$\tilde W \doteq \{\tilde W(t,\tilde\omega) \doteq \tilde\omega(t), 0 \le t \le T\}$ is a standard $d$-dimensional Brownian motion with respect to the filtration
$\{\tilde\clf_t^{\tilde W}
\doteq \{\sigma\{\tilde W(s):s \le t\}\}$.
Let $\{\tilde\clf_t\}$ be the augmented filtration, namely $\tilde\clf_t \doteq \sigma\{\tilde
\clf_t^{\tilde W} \cup \tilde\cln\}$ and $\tilde\cln$ is the collection of all $\tilde\PP$-null sets.
Define
\begin{equation}
	\label{eq_kill:ftil(xbd)_def}
	\tilde F(x) \doteq - \log \tilde\EE \left[ \exp \left( -F(\tilde W, x) \right) \right], \quad x \in \Smb.
\end{equation}
From the independence between $W$ and $X$ we see that
\begin{equation*}
	-\log \EE \left[\exp \left(-F(B,X)\right) \right] = -\log \EE\left[ \exp \left(-\tilde F(X)\right) \right].
\end{equation*}
Applying classical results of Donsker--Varadhan (cf.~\cite[Proposition 2.2]{buddupbook} ) to the right side, we have the following representation formula from the above equality
\begin{equation}
	\label{eq_kill:rep_complex}
	-\log \EE \left[\exp \left(-F(B,X)\right) \right] = \inf_{\Pi \in \clp(\Smb)} \left[ R(\Pi\|\rho) + \int_\Smb \tilde F(x) \Pi(dx) \right].
\end{equation}
Consider the collection of processes
\begin{equation*}
	\cla \doteq \left\{ u : \mbox{the process } u \mbox{ is } \tilde\clf_t \mbox{-predictable and } \tilde \EE \int_0^T \|u(s)\|^2 \,ds < \infty\right\}.
\end{equation*}
From \eqref{rep:bodu} we now have the following variational formula
\begin{equation}
	\label{eq_kill:ftil(xbd)}
	\tilde F(x) = \inf_{u \in \cla} \tilde \EE \left[ \frac{1}{2} \int_0^T \|u(s)\|^2 \, ds + F\left(\tilde W + \int_0^\cdot u(s) \, ds, x \right) \right],
\end{equation}
which together with \eqref{eq_kill:rep_complex} gives
\begin{align*}
	& -\log \EE \left[\exp \left(-F(B,X)\right) \right]  \\
	& \quad = \inf_{\Pi \in \clp(\Smb)} \left\{ R(\Pi\|\rho) 
	 + \int_\Smb \inf_{u\in \cla} \tilde\EE \left[ \frac{1}{2} \int_0^T \|u(s)\|^2 \, ds + F\left(\tilde W + \int_0^\cdot u(s) \, ds, x \right) \right] \Pi(dx) \right\}.
\end{align*}
The above representation is inconvenient to use directly in  large deviation proofs and the main challenge in using it, in comparison to the representation in Theorem \ref{prop_kill:rep_general} is the presence of the infimum under the integral on the right side. Nevertheless, the above representation is the starting point of the proof of Theorem \ref{prop_kill:rep_general} which uses discretization and measurable selection arguments to show not only that  the integral and the infimum can be interchanged in the representation but also that the minimal filtration $\tilde \clf_t$ can be replaced by a larger filtration with respect to which which $B$ is a martingale. The fact that one can allow for a general filtration turns out to be crucial in the proof of the lower bound of the large deviation proof for the collection $\{\mu^n\}$ described earlier in the section.

\section{Infinite Time Horizon }
\label{sec:infpath}
In some problems one needs a variational formula for functionals of the paths of a Brownian motion over an infinite time horizon. We describe one such situation below.
\subsection{Empirical Measures of State Processes Driven by a Fractional Brownian Motion.}
One  situation where the need to consider infinite-length  paths of Brownian motion  arises naturally is in the study of path empirical measures of ergodic solutions of stochastic differential equations driven by a fractional Brownian motion (fBM). The term ergodicity here needs to be interpreted suitably as the solution processes are not Markov, however there is a natural way to augment the state of the system with an infinite length path of a Brownian motion to get a Markovian state descriptor  (see \cite{hairFBM}). The starting point of obtaining such a Markovian description is the Mandelbrot - Van Ness decomposition of a fBM in terms of a two sided Brownian motion. This decomposition is as follows.

Let $(\Om, \clf, \PP)$ be a probability space on which is given a real two-sided Wiener process $\{W(t): -\infty < t <\infty\}$. Such a process is characterized by the property that for every $t \in \RR$, $\{W^t(s) \doteq W(t-s) - W(t)\}_{s \ge 0}$ is a standard Brownian motion independent of $\{W(t+u): u \ge 0\}$. Fix $H \in (0,1)$. Then a real fractional Brownian motion with Hurst parameter $H$, denoted as $B_H(\cdot)$, has the following representation: For $t\ge 0$,
\begin{align*}
	B_H(t) = \int_{-\infty}^0 \clg(r) (dW(r+t) - dW(r)) = \int_{-\infty}^0 (\clg(r+t) -\clg(r)) dW(r) + \int_0^t \clg(r-t) dW(r), 
\end{align*} 
where $\clg(r) = \alpha_H r^{H-1/2}$, for $r\ge 0$, and $\alpha_H$ is a suitable constant.

One is interested in stochastic differential equations (SDE) driven by such a driving noise. For simplicity of presentation, considered the simplest such equation which describes a fractional Ornstein-Uhlenbeck process:
$$dX(t) = - \alpha X(t) dt + dB_H(t).$$
The ergodicity behavior of $\{X(t), t\ge 0\}$ and in fact of much more general  processes given as solutions of  SDE driven by a fBM have been studied in \cite{hairFBM} and in several subsequent works. 
These results, in particular, characterize the law of the limit of the occupation measures $\{L^T, T >0\}$ as $T \to \infty$, where
$$L^T(\cdot) \doteq \frac{1}{T} \int_0^T \delta_{X(s)} \, ds, \; T>0.$$
In order to study the large deviation behavior of the above collection of $\clp(\RR)$ valued random variables, one needs to characterize the asymptotic behavior of Laplace functionals of the following form:
$$-\frac{1}{T} \log \EE \exp\{-T f(L^T)\}$$
where $f: \clp(\RR) \to \RR$ is a bounded continuous map. Note that $L^T$ is a measurable map of the infinite path of a standard Brownian motion, namely the collection
$\{W^T(s) = W(T-s) -W(T): s \ge 0\}$. Thus, for a suitable bounded measurable map $F_T: \clc_{0}^{\infty}(\RR) \to \RR$,
$$-\frac{1}{T} \log \EE \exp\{-T f(L^T)\} = -\frac{1}{T} \log \EE \exp\{-T F_T(W^T)\}.$$
Here, for a Polish space $\cle$, $\clc_{0}^{\infty}(\cle)$ denotes the space of continuous functions from $[0,\infty)$ to $\cle$, equipped with the  local uniform topology.
Thus, in order to study asymptotic behavior of such a quantity, it is of interest to develop a variational representation for functionals of infinite-length paths of a standard Brownian motion.

\subsection{Representation for Infinte Length Brownian Paths.}
We will only provide detailed arguments for a one dimensional Brownian motion. Analogous results can be established for a Brownian motion with values in a separable Hilbert space by combining  the arguments here with those in  \cite{buddup3}. We also remark that, in order to study ergodicity properties of $X(\cdot)$ and other fractional diffusions, \cite{hairFBM} and subsequent papers view $W^T$ as an element of a suitable path space $\clh_H$ of H\"{o}lder continuous functions. Since the Borel $\sigma$-fields on $\clh_H$ and $\clc_{0}^{\infty}(\RR) $ are the same, one immediately obtains from Theorem \ref{thm:infhor} below  also a representation for real bounded measurable functionals on $\clh_H$.

Let  $(\Om, \clf, \PP, \{\clf_t\}_{t\ge 0})$ be a filtered probability space where the filtration satisfies the usual conditions.
Let $\{W(t)\}_{t\ge 0}$ be a standard $\clf_t$-Wiener process on this space. The $\PP$-augmentation of the filtration $\{\sigma\{W(s): 0 \le s \le t\}\}_{t\ge 0}$ will be denoted as $\{\clg_t\}_{t\ge 0}$. Note that $\clg_t \subset \clf_t$ for all $t\ge 0$. We denote by $\cla$ (resp. $\tilde \cla$) the collection of all $\RR$-valued $\clg_t$-predictable (resp. $\clf_t$-predictable) processes $v: [0,\infty) \times \Om \to \RR$ that satisfy
$$\PP\left\{ \int_0^{\infty} |v(t)|^2 dt <\infty\right \} = 1.$$
It will be convenient to consider the following subcollections of $\cla$ and $\tilde \cla$ as well.
Let, for $M \in (0,\infty)$,
$$S_M \doteq \left\{u \in L^2([0,\infty):\RR): \int_0^{\infty} |u(s)|^2 ds \le M\right \},$$
and $\cla_{b,M}$ (resp. $\tilde \cla_{b,M}$) be elements of $\cla$ (resp. $\tilde \cla$) that take values in $S_M$ a.s.
Define 
$$\cla_b \doteq \cup_{M \in \NN} \cla_{b,M}, \; \tilde\cla_b \doteq \cup_{M \in \NN} \tilde\cla_{b,M}.$$
The following is the main variational representation of the section.
\begin{theorem}\label{thm:infhor}
	Let $F: \clc_0^{\infty}(\RR) \to \RR$ be a bounded and measurable map. Then
	\begin{equation}
		-\log \EE \exp\{-F(W)\} = \inf_{v \in \clr} \EE \left[ \frac{1}{2} \int_0^{\infty} |v(s)|^2 ds + F(W + \int_0^{\cdot} v(s) ds)\right],
	\end{equation}
	where $\clr\in \{\cla, \tilde \cla, \cla_b, \tilde \cla_b\}$.
\end{theorem}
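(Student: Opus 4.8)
The plan is to reduce the infinite-horizon statement to the finite-horizon representation of Theorem~\ref{thm:hilsp} (specialized to $H=\RR$, or equivalently \eqref{rep:bodu}) and its refinements, combined with soft relative-entropy arguments. First I would record the elementary monotonicity: since $\cla_b \subset \cla \subset \tilde\cla$ and $\cla_b \subset \tilde\cla_b \subset \tilde\cla$, the four infima obey $V(\tilde\cla)\le V(\cla)\le V(\cla_b)$ and $V(\tilde\cla)\le V(\tilde\cla_b)\le V(\cla_b)$, where $V(\clr)$ denotes the right-hand side over the class $\clr$. Writing $L \doteq -\log\EE\exp\{-F(W)\}$, it therefore suffices to prove the two inequalities $L \le V(\tilde\cla)$ (a lower bound on the cost, valid over the largest class) and $V(\cla_b)\le L$ (existence of near-optimal controls in the smallest class); chaining these through the monotonicity forces all four values to equal $L$.

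For the lower bound $L \le V(\tilde\cla)$ I would argue directly with relative entropy, assuming the cost is finite so that $\EE\int_0^\infty|v|^2\,ds<\infty$. Let $\mu$ be Wiener measure on $\clc_0^\infty(\RR)$ and, for $v\in\tilde\cla$, let $\nu_v$ be the law of $W+\int_0^\cdot v\,ds$. The Donsker--Varadhan variational formula (\cite[Proposition 2.2]{buddupbook}, applied on the Polish space $\clc_0^\infty(\RR)$) gives $L=\inf_\gamma\{R(\gamma\|\mu)+\int F\,d\gamma\}\le R(\nu_v\|\mu)+\EE[F(W+\int_0^\cdot v\,ds)]$. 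It then remains to establish the entropy--energy bound $R(\nu_v\|\mu)\le \tfrac12\EE\int_0^\infty|v|^2\,ds$. I would obtain this from the finite-horizon Girsanov bound $R(\nu_v^T\|\mu^T)\le\tfrac12\EE\int_0^T|v|^2\,ds$ for the restrictions to $[0,T]$ and then let $T\to\infty$, using that relative entropy along the generating filtration $\{\sigma(w(s):s\le T)\}_T$ increases to $R(\nu_v\|\mu)$.

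For the achievability $V(\cla_b)\le L$ I would mimic the proof of Theorem~\ref{thm:simp}, adding a time-truncation step. Assume first that $F$ is bounded and continuous, fix $\veps>0$, and for $T>0$ introduce the cylinder functional $F_T(w)\doteq F(w(\cdot\wedge T))$, which depends only on $w|_{[0,T]}$, satisfies $\|F_T\|_\infty\le\|F\|_\infty$, and converges pointwise to $F$ on $\clc_0^\infty(\RR)$. Applying Theorem~\ref{thm:hilsp} and Theorem~\ref{thm:simp} to the induced bounded continuous functional on $\clc_0^T(\RR)$ and extending the near-optimal control by zero on $(T,\infty)$ produces $v^T\in\cla_b$ with $\EE[\tfrac12\int_0^\infty|v^T|^2\,ds+F_T(W+\int_0^\cdot v^T\,ds)]\le L_T+\veps$, where $L_T\doteq-\log\EE\exp\{-F_T(W)\}\to L$. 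The energy bound $\tfrac12\EE\int_0^\infty|v^T|^2\,ds\le L+\veps+2\|F\|_\infty$ holds uniformly in $T$, so the laws $\nu_{v^T}$ have a uniform bound on $R(\nu_{v^T}\|\mu)$. Since $F_T\to F$ $\mu$-a.s., the uniform-integrability estimate afforded by this entropy bound (\cite[Lemma 2.5]{buddupbook}) yields $\EE|F(W+\int_0^\cdot v^T\,ds)-F_T(W+\int_0^\cdot v^T\,ds)|\to 0$, whence $\liminf_T\,\mathrm{cost}_F(v^T)\le L+\veps$ and so $V(\cla_b)\le L$. The passage from continuous to general bounded measurable $F$ is then exactly as in the proof of Theorem~\ref{thm:simp}: approximate $F$ by continuous $F_n\to F$ $\mu$-a.s.\ with $\|F_n\|_\infty\le\|F\|_\infty$, use the continuous case to produce uniformly-bounded-energy controls, and invoke \cite[Lemma 2.5]{buddupbook} once more to replace $F_n$ by $F$ in the cost.

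The main obstacle is this last transfer step: the near-optimal controls $v^T$ (and later $v^n$) move with the approximation parameter, so the pointwise convergence $F_T\to F$, a statement under the fixed Wiener measure $\mu$, must be promoted to convergence of the costs $\EE_{\nu_{v^T}}|F-F_T|$ along the shifting controlled laws $\nu_{v^T}$. This is precisely where the reduction to the bounded-energy classes $\cla_b,\tilde\cla_b$ is essential, since it furnishes the uniform bound on $R(\nu_{v^T}\|\mu)$ and hence the uniform integrability needed to apply \cite[Lemma 2.5]{buddupbook}. Verifying the cylinder reduction cleanly for each class (truncating a control to $[0,T]$ preserves predictability and does not increase energy, and extending a finite-horizon control by zero preserves membership in $\cla_b$, $\tilde\cla_b$) is routine, but must be checked so that the finite-horizon theorems apply verbatim.
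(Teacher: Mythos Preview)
Your proposal is correct and complete, but both halves take a somewhat different route from the paper.

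For the inequality $L\le V(\tilde\cla)$, the paper does not argue via the entropy--energy bound $R(\nu_v\|\mu)\le\tfrac12\EE\int_0^\infty|v|^2\,ds$ and the monotone limit of relative entropy along the filtration. Instead it first reduces to simple controls (Lemma~\ref{lem:simpequ}) and, for $v\in\cla_s$, invokes the second part of that lemma to produce a $\tilde v\in\cla_s$ with $\QQ^{\tilde v}\circ(W^{\tilde v},\tilde v)^{-1}=\PP\circ(W,v)^{-1}$; Donsker--Varadhan is then applied with the explicit Girsanov measure $\QQ^{\tilde v}$, yielding the cost identity exactly rather than as an inequality. The extension from $\tilde\cla_s$ to $\tilde\cla$ is done by $L^2$-approximation plus the same entropy/Lemma~2.5 device you use. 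Your argument is shorter and more conceptual; the paper's buys the additional fact that the infimum over simple classes already equals $L$ (cf.\ the remark after the proof).

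For the inequality $V(\cla_b)\le L$, the paper avoids your detour through continuous $F$. It goes directly from a general bounded measurable $F$ to cylinder functionals by setting $F_n\doteq\EE(F(W)\mid\clg_{T_n})$ and using the martingale convergence theorem to get $F_n\to F(W)$ a.s.; each $F_n$ is automatically of the form $\tilde F_n(W(\cdot\wedge T_n))$, so the finite-horizon representation applies, and the passage back to $F$ uses the uniform entropy bound and \cite[Lemma~2.5]{buddupbook} exactly as you describe. This is a single reduction step in place of your two (continuous $F$ via time truncation, then measurable $F$ via Lusin). Your approach works because for continuous $F$ the map $w\mapsto F(w(\cdot\wedge T))$ converges pointwise on $\clc_0^\infty(\RR)$, but the martingale-convergence route is more economical and does not require this pointwise statement.
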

In preparation for the proof of the theorem we give sime preliminary results.
For $v \in \tilde \cla_b$, and $T \in [0,\infty)$, let
$$L_{\infty}(v) \doteq \exp \left\{ \int_0^{\infty} v(s) dW(s) - \frac{1}{2} \int_0^{\infty} |v(s)|^2 ds\right\}, \; 
L_{T}(v) \doteq \exp \left\{ \int_0^{T} v(s) dW(s) - \frac{1}{2} \int_0^{T} |v(s)|^2 ds\right\}.$$
The following result is a simple consequence of Girsanov's theorem.
\begin{lemma}\label{lem:gir}
	Let $v \in \tilde \cla_b$. Then $\EE L_{\infty}(v) = 1$. Define the probability measure $\QQ$ on $(\Om, \clf)$ as
	$$\frac{d\QQ}{d\PP} \doteq L_{\infty}(v) = \exp \left\{ \int_0^{\infty} v(s) dW(s) - \frac{1}{2} \int_0^{\infty} |v(s)|^2 ds\right\}.$$
	Then 
	$$\tilde W(t) \doteq W(t) - \int_0^t v(s) ds, \; t \ge 0$$
	is a $\clf_t$-Wiener process on $(\Om, \clf, \QQ, \{\clf_t\}_{t\ge 0})$.

\end{lemma}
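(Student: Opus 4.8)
The plan is to reduce the infinite-horizon statement to the classical finite-horizon Girsanov theorem, the only genuine work being the passage $T \to \infty$. First I would exploit the defining feature of $\tilde\cla_b$: there is an $M \in (0,\infty)$ with $\int_0^\infty |v(s)|^2\, ds \le M$ $\PP$-a.s. This uniform bound on the quadratic variation makes Novikov's condition hold uniformly in $T$, since $\EE \exp\{\tfrac12 \int_0^T |v(s)|^2\, ds\} \le e^{M/2}$, so each $\{L_t(v)\}_{0\le t\le T}$ is a genuine $\{\clf_t\}$-martingale and in particular $\EE L_T(v) = 1$ for every $T$.

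The heart of the argument is to show $\EE L_\infty(v)=1$, i.e.\ that the nonnegative martingale $\{L_T(v)\}_{T\ge 0}$ is uniformly integrable and closed by $L_\infty(v)$. Here I would use the bounded quadratic variation a second time to get $L^p$-boundedness for every $p>1$: writing $L_T(v)^p = \exp\{p\int_0^T v\,dW - \tfrac{p^2}{2}\int_0^T |v|^2\,ds\}\cdot \exp\{\tfrac{p^2-p}{2}\int_0^T |v|^2\,ds\}$, the first factor is the exponential martingale associated with the control $pv$ (again of bounded quadratic variation, hence of expectation $1$) while the second is bounded by $e^{(p^2-p)M/2}$, so $\sup_T \EE L_T(v)^p < \infty$. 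This uniform $L^p$ bound yields uniform integrability, whence $L_T(v) \to L_\infty(v)$ in $L^1$ and $\EE L_\infty(v)=\lim_T \EE L_T(v)=1$. (Almost-sure convergence $L_T(v)\to L_\infty(v)$ is separately clear, since $\int_0^\infty |v|^2\,ds \le M$ makes $\{\int_0^T v\,dW\}_T$ an $L^2$-bounded martingale, hence a.s.\ convergent.) Consequently $\QQ$ is a bona fide probability measure.

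Next I would record the consistency of the change of measure: for $A \in \clf_T$ the martingale property gives $\EE[L_\infty(v)\mid \clf_T]=L_T(v)$, so $\QQ(A)=\EE[L_\infty(v)\one_A]=\EE[L_T(v)\one_A]$; that is, the restriction of $\QQ$ to $\clf_T$ has density $L_T(v)$ with respect to the restriction of $\PP$. The finite-horizon Girsanov theorem then applies on each interval $[0,T]$ and shows that $\{\tilde W(t)\}_{0\le t\le T}$ is a standard $\{\clf_t\}$-Wiener process under $\QQ$ restricted to $\clf_T$.

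Finally I would patch these finite-horizon conclusions into the infinite-horizon statement. Since the defining properties of an $\{\clf_t\}$-Wiener process---continuity of paths, Gaussianity of the increment $\tilde W(t)-\tilde W(s)$ with variance $t-s$, and its $\QQ$-independence of $\clf_s$---each involve only finitely many time points, for any such requirement I may choose $T$ larger than all the times involved and invoke the finite-horizon result, whose validity under $\QQ$ on $\clf_T$ I have just established. Hence $\tilde W$ is a $\{\clf_t\}$-Wiener process on $[0,\infty)$ under $\QQ$. The main obstacle is the single analytic step of establishing uniform integrability, equivalently $\EE L_\infty(v)=1$; once that is in hand, everything else is bookkeeping built on the classical finite-time theorem.
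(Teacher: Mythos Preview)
Your proposal is correct and follows essentially the same route as the paper: both use the a.s.\ bound $\int_0^\infty |v|^2\,ds\le M$ to invoke Novikov on every finite horizon, establish uniform integrability of $\{L_T(v)\}$ via an $L^p$ bound (the paper does the $p=2$ case, getting $\sup_T \EE L_T(v)^2\le e^M$, while you write the general-$p$ decomposition), conclude $\EE L_\infty(v)=1$ and $\EE[L_\infty(v)\mid\clf_T]=L_T(v)$, and then appeal to finite-horizon Girsanov. The only cosmetic difference is that you spell out the patching from finite to infinite horizon more explicitly than the paper does.
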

\begin{proof}
	Fix $v \in \tilde \cla_b$. Then there is a $M <\infty$ such that $v \in \tilde \cla_{b,M}$. Since, for every $T\in [0,\infty)$, $\int_0^T |v(s)|^2 ds \le \int_0^{\infty} |v(s)|^2 ds \le M$, we have from Novikov's criterion (cf. \cite[Corollary VIII.1.16]{revyor}) $\EE(L_T(v))= 1$ for all $T \in [0, \infty)$.
	Also, using Cauchy-Schwarz inequality, it is easy to verify that
	$\sup_{T\in [0,\infty)} \EE(L_T(v))^2 \le e^M <\infty$ which shows that $\{L_T(v), T\ge 0\}$ is a uniformly integrable $\clf_t$-martingale on $(\Om, \clf, \PP)$. Noting that
	$L_T(v) \to L_{\infty}(v)$ in probability, we now see that in fact the convergence holds in $L^1$ as well and consequently $\EE L_{\infty}(v) = 1$ proving the first statement in the lemma. Also, it follows that, for $T \in [0,\infty)$, $E(L_{\infty}(v) \mid \clf_T) = L_T(v)$ a.s. By Girsanov's theorem (cf. \cite[Theorem 3.5.1]{karshr}) it now follows that 
	$\tilde W$ is a $\clf_t$-Wiener process on $(\Om, \clf, \QQ, \{\clf_t\}_{t\ge 0})$. This completes the proof of the second statement.
\end{proof}

The following collection of simple processes will be useful.
\begin{definition}
	A process $v \in \tilde \cla$ (resp. $\cla$) is called simple, if there exists a $T \in (0,\infty)$, $k \in \NN$, $N \in \NN$ and 
	$0= t_1\le t_2\le \cdots \le t_{k+1}=T$, such that 
	$$v(s,\om) \doteq \sum_{j=1}^k X_j(\om) \one_{(t_j, t_{j+1}]}(s), \; (s,\om) \in [0,\infty)\times \Om,$$
	where, for each $j = 1, \ldots , k$, $X_j$ is a real $\clf_{t_j}$ (resp. $\clg_{t_j}$) measurable random variable satisfying $|X_j|\le N$. 
	We denote the collection of all such simple processes as $\tilde \cla_s$ (resp. $\cla_s$).
\end{definition}
Note that $\cla_s \subset \cla_b$, $\tilde \cla_s \subset \tilde \cla_b$.

Since a simple process is zero after some finite time horizon, the known results for finite time horizon filtrations (see e.g. \cite[Lemma 8.7 and Equation (8.14)]{buddupbook}) give the following result.
For $v \in \cla_b$, denote $W^v(\cdot) \doteq W(\cdot) - \int_0^{\cdot} v(s) ds$ and define the probability measure $\QQ^v$ on $(\Om, \clf)$ as
$d\QQ^v \doteq L_{\infty}(v)d\PP$.
\begin{lemma}\label{lem:simpequ}
	For every $v_0 \in \tilde \cla_s$, a bounded measurable map $F: \clc_0^{\infty}(\RR) \to \RR$, and $\veps>0$ there is a $v \in \cla_s$ such that
	\begin{align*}
		\EE\left[ \frac{1}{2} \int_0^{\infty} |v(s)|^2 ds + F(W + \int_0^{\cdot} v(s) ds)\right] \le 
		\EE\left[ \frac{1}{2} \int_0^{\infty} |v_0(s)|^2 ds + F(W + \int_0^{\cdot} v_0(s) ds)\right] + \veps.
		\end{align*}
		Furthermore, for every $v \in \cla_s$ there is a $\tilde v \in \cla_s$ such that $\QQ^{\tilde v} \circ (W^{\tilde v}, \tilde v)^{-1} = \PP \circ (W,  v)^{-1} $.
\end{lemma}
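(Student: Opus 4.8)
The plan is to reduce both assertions to their finite-horizon counterparts, using the two structural features of a simple process: it vanishes after a deterministic time $T_0 \doteq t_{k+1}$, and, because $W$ is an $\clf_t$-Wiener process, the post-$T_0$ increment process $W^{(T_0)}(u)\doteq W(T_0+u)-W(T_0)$, $u\ge 0$, is a standard Brownian motion independent of $\clf_{T_0}$.

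For the first assertion, fix $v_0\in\tilde\cla_s$, supported on $[0,T_0]$. The point is to encode the infinite-horizon dependence of $F$ in a bounded measurable functional on the finite-horizon path space. For $\eta\in\clc_0^{T_0}(\RR)$ and $b\in\clc_0^{\infty}(\RR)$ let $\kappa(\eta,b)\in\clc_0^{\infty}(\RR)$ be the path equal to $\eta$ on $[0,T_0]$ and to $\eta(T_0)+b(\cdot-T_0)$ on $[T_0,\infty)$, and set $G(\eta)\doteq\EE[F(\kappa(\eta,B))]$, where $B$ is a standard Brownian motion. Since $\kappa$ is continuous and $F$ is bounded and measurable, $G$ is bounded and measurable. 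The perturbed path satisfies $W+\int_0^\cdot v_0\,ds=\kappa\big((W+\int_0^\cdot v_0\,ds)|_{[0,T_0]},W^{(T_0)}\big)$, its restriction to $[0,T_0]$ is $\clf_{T_0}$-measurable, and $W^{(T_0)}$ is Brownian and independent of $\clf_{T_0}$; conditioning on $\clf_{T_0}$ therefore gives
\begin{align*}
&\EE\Big[\tfrac{1}{2}\int_0^{\infty}|v_0(s)|^2\,ds + F\big(W+\int_0^{\cdot}v_0(s)\,ds\big)\Big]\\
&\qquad= \EE\Big[\tfrac{1}{2}\int_0^{T_0}|v_0(s)|^2\,ds + G\big((W+\int_0^{\cdot}v_0(s)\,ds)|_{[0,T_0]}\big)\Big].
\end{align*}
The same identity holds verbatim for any $v\in\cla_s$ supported on $[0,T_0]$. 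The right-hand side is a purely finite-horizon cost for the bounded measurable functional $G$, so the finite-horizon equivalence of simple controls over $\{\clf_t\}$ and over $\{\clg_t\}$ (the $H=\RR$ case of Theorem \ref{thm:simp}, equivalently \cite[Lemma 8.7 and Eq.~(8.14)]{buddupbook}) yields $\inf_{v\in\cla_s}=\inf_{v\in\tilde\cla_s}$ of this cost. Since $v_0$ makes the $\tilde\cla_s$-cost at most its own value, I can select $v\in\cla_s$ within $\veps$ of the common infimum, extend it by $0$ past $T_0$, and translate back through the identity to obtain the claimed inequality.

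For the second assertion, fix $v\in\cla_s$, $v(s)=\sum_{j=1}^k X_j\one_{(t_j,t_{j+1}]}(s)$. Since $X_j$ is $\clg_{t_j}$-measurable with $|X_j|\le N$, by Doob--Dynkin there are measurable $g_j:\clc_0^{t_j}(\RR)\to[-N,N]$ with $X_j=g_j(W|_{[0,t_j]})$ a.s.; write $v=\Phi(W)$ for the induced measurable map $\Phi(\omega)(s)\doteq\sum_j g_j(\omega|_{[0,t_j]})\one_{(t_j,t_{j+1}]}(s)$. I would then define $\tilde v$ as the solution of the consistency relation $\tilde v=\Phi(W^{\tilde v})$, i.e.\ $\tilde X_j\doteq g_j(W^{\tilde v}|_{[0,t_j]})$ with $W^{\tilde v}=W-\int_0^\cdot\tilde v\,ds$, solved recursively in $j$: the value $\tilde X_j$ uses $W^{\tilde v}$ only up to time $t_j$, and $W^{\tilde v}|_{[0,t_j]}$ is determined by $W|_{[0,t_j]}$ together with $\tilde X_1,\dots,\tilde X_{j-1}$, so the recursion is well posed, each $\tilde X_j$ is $\clg_{t_j}$-measurable with $|\tilde X_j|\le N$, and hence $\tilde v\in\cla_s\subset\tilde\cla_b$. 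By Lemma \ref{lem:gir}, $W^{\tilde v}$ is a standard Brownian motion under $\QQ^{\tilde v}$, and $\tilde v=\Phi(W^{\tilde v})$ continues to hold $\QQ^{\tilde v}$-a.s.\ (as $\QQ^{\tilde v}\sim\PP$). Pushing the law of $W^{\tilde v}$ under $\QQ^{\tilde v}$, which is standard Wiener measure, forward by $\omega\mapsto(\omega,\Phi(\omega))$ then gives $\QQ^{\tilde v}\circ(W^{\tilde v},\tilde v)^{-1}=\PP\circ(W,\Phi(W))^{-1}=\PP\circ(W,v)^{-1}$.

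The main obstacle is the first assertion: handling the genuinely infinite-horizon functional $F$. Everything hinges on factoring the independent Brownian future out of $F$ into the auxiliary functional $G$ and verifying that $G$ is bounded and measurable, so that the already-available finite-horizon simple-control results apply. The second assertion, by contrast, reduces to the well-posedness and $\{\clg_t\}$-adaptedness of the recursive fixed point $\tilde v=\Phi(W^{\tilde v})$, after which Lemma \ref{lem:gir} and a direct change-of-variables deliver the identity of laws.
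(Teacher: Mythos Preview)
Your argument is correct and is precisely a detailed fleshing-out of the paper's one-line justification, which simply observes that since a simple process vanishes after some finite deterministic time, the finite-horizon results \cite[Lemma 8.7 and Eq.~(8.14)]{buddupbook} apply. Your construction of the auxiliary functional $G$ (integrating $F$ against the law of the independent Brownian future past $T_0$) is exactly the device needed to make that reduction precise for the first assertion, and your recursive fixed-point construction $\tilde v=\Phi(W^{\tilde v})$ is the standard argument behind the cited Eq.~(8.14) for the second assertion.
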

In the above lemma when talking about the distribiution of $v, \tilde v$, these are regarded as random variables in $S_M$, for a suitable $M \in (0,\infty)$, where this space is equipped with the weak toplogy inherited from that on the Hilbert space $L^2([0, \infty):\RR)$. 

We now complete the proof of Theorem \ref{thm:infhor}.\\

\noindent
{\bf Proof of Theorem \ref{thm:infhor}.}
We begin by proving the upper bound
\begin{equation}\label{eq:uppbd}
	-\log \EE \exp\{-F(W)\} \le \inf_{v \in \clr} \EE \left[ \frac{1}{2} \int_0^{\infty} |v(s)|^2 ds + F(W + \int_0^{\cdot} v(s) ds)\right]
\end{equation}
Since $\tilde \cla$ is the largest collection, it suffices to show the result with $\clr = \tilde \cla$.
We will first show the inequality in \eqref{eq:uppbd} (without the infimum) for an arbitrary $v \in \tilde \cla_s$. Note that for this, in view of the first part of Lemma \ref{lem:simpequ}, it suffices to  establish the inequality for an arbitrary $v \in \cla_s$. Consider now such a $v$.
From the second part of Lemma \ref{lem:simpequ} it follows that there is a $\tilde v \in \cla_s$ such that $\QQ^{\tilde v} \circ (W^{\tilde v}, \tilde v)^{-1} = \PP \circ (W,  v)^{-1} $.
From the Donsker-Varadhan variational formula (cf. \cite[Proposition 2.2(a)]{buddupbook}) we have that
$$-\log \EE \exp\{-F(W)\} \le R(\QQ^{\tilde v}\| \PP) + \int_{\Om} F(W) d\QQ^{\tilde v}.$$
Note that
\begin{multline*}
	R(\QQ^{\tilde v}\| \PP) = \int_{\Om} \log \left(\frac{d\QQ^{\tilde v}}{d\PP}\right) d\QQ^{\tilde v} = \EE_{\QQ^{\tilde v}}\left (\int_0^{\infty} \tilde v(s) dW(s) - \frac{1}{2} \int_0^{\infty} |\tilde v(s)|^2 ds\right)\\
	 = \EE_{\QQ^{\tilde v}}\left (\int_0^{\infty} \tilde v(s) dW^{\tilde v}(s) + \frac{1}{2} \int_0^{\infty} |\tilde v(s)|^2 ds\right) = 
	 \EE\left (\int_0^{\infty}  v(s) dW(s) + \frac{1}{2} \int_0^{\infty} |v(s)|^2 ds\right) \\
	 = \frac{1}{2} \EE\int_0^{\infty} |v(s)|^2 ds,
	\end{multline*}
where the fourth equality uses the second part of Lemma \ref{lem:simpequ}.
Also, by another application of this result, we have
$$\int_{\Om} F(W) d\QQ^{\tilde v} = \int_{\Om} F(W^{\tilde v} + \int_0^{\cdot} \tilde v(s) ds) d\QQ^{\tilde v} = \EE F(W + \int_0^{\cdot}  v(s) ds).$$
Combining the last two displays, we now have the inequality in \eqref{eq:uppbd} (without the infimum) for any $v \in \cla_s$ and thus, as discussed previously,  also for any $v \in \tilde \cla_s$.
Now consider a $v \in \tilde \cla$. Without loss of generality, $\EE\int_0^{\infty} |v(s)|^2 ds < \infty$. Thus, for each $n \in \NN$ there is a $T_n<\infty$ such that $\EE\int_{T_n}^{\infty}|v(s)|^2 ds \le n^{-1}$. Also, we can find (cf. \cite[Lemma II.1.1]{ikewat}]), for each $n\in \NN$, a  $v_n \in \tilde \cla_s$ that satisfying $v_n(t)=0$ for all $t\ge T_n$ such that 
\begin{equation}\label{eq:746}
\EE\int_0^{\infty} |v_n(t)-v(t)|^2 dt = \EE\int_0^{T_n} |v_n(t)-v(t)|^2 dt + \EE\int_{T_n}^{\infty} |v(t)|^2 dt \le 2n^{-1}.\end{equation}
Note that 
$$\sup_{n}\int_0^{\infty} |v_n(t)|^2 dt \le 2 \int_0^{\infty} |v(t)|^2 dt + 4 <\infty.$$
From the fact that the inequality in \eqref{eq:uppbd} holds for any $v \in \cla_s$, we have
\begin{equation}\label{eq:uppbd2}
	-\log \EE \exp\{-F(W)\} \le \EE \left[ \frac{1}{2} \int_0^{\infty} |v_n(s)|^2 ds + F(W + \int_0^{\cdot} v_n(s) ds)\right].
\end{equation}
Note that, from \eqref{eq:746}, for any $T < \infty$, 
$$E\sup_{0\le t \le T} |\int_0^t v_n(s) ds - \int_0^t v(s) ds|^2 \le  T E\int_0^{\infty}|v_n(s)-v(s)|^2 ds \to 0, \; \mbox{ as } n \to \infty.$$
Thus we have that $\theta_n \doteq \cll(W + \int_0^{\cdot} v_n(s) ds) \to \cll(W + \int_0^{\cdot} v(s) ds) \doteq \theta$,  as $n\to \infty$, as probability measures on $\clc_0^{\infty}(\RR)$.
Also, denoting the probability law of $W$ by $\theta_0$,   note that $\theta_0 = \QQ \circ (W+ \int_0^{\cdot} v_n(s) ds)$, where $\QQ$ is as in Lemma \ref{lem:gir} with $v$ there replaced by $-v_n$. Thus
\begin{multline*}
	R(\theta_n \| \theta_0) = R(\PP \circ (W + \int_0^{\cdot} v_n(s) ds)^{-1} \|  \QQ \circ (W + \int_0^{\cdot} v_n(s) ds)^{-1}) \\
	\le R(\PP\|\QQ) = \EE\left(\int_0^{\infty} v_n(s) dW(s) + \frac{1}{2} \int_0^{\infty} |v_n(s)|^2 ds\right) = \EE\int_0^{\infty} |v_n(s)|^2 ds.\end{multline*}
Thus $\sup_n R(\theta_n \| \theta_0) <\infty$. This together with $\theta_n \to \theta$ now implies (see \cite[Lemma 2.5]{buddupbook}) that
$F(W + \int_0^{\cdot} v_n(s) ds) \Rightarrow F(W + \int_0^{\cdot} v(s) ds)$. Together with the fact
from \eqref{eq:746} that $\EE \int_0^{\infty} |v_n(t))|^2 dt$ converges to $\EE \int_0^{\infty} |v(t))|^2 dt$, we now have from \eqref{eq:uppbd2} that this inequality holds with $v_n$ replaced by $v$. Since $v \in \cla$ is arbitrary, this proves the upper bound in \eqref{eq:uppbd} for $\clr = \tilde \cla$ and therefore for any $\clr \in \{\cla, \cla_b, \tilde \cla, \tilde \cla_b\}$.\\

\noindent We now establish the complementary lower bound:
\begin{equation}\label{eq:lowbd}
	-\log \EE \exp\{-F(W)\} \ge \inf_{v \in \clr} \EE \left[ \frac{1}{2} \int_0^{\infty} |v(s)|^2 ds + F(W + \int_0^{\cdot} v(s) ds)\right]
\end{equation}
It suffices to show the inequality for $\clr = \cla_b$ since that is the smallest collection.
Consider first the case where for some $T \in(0, \infty)$, and a bounded measurable map 
$F_T: \clc_0^T(\RR) \to \RR$, $F(w(\cdot)) = F_T(w(\cdot \wedge T))$ for all $w \in \clc_0^{\infty}(\RR)$.
In this case, denoting the subcollection of $\cla_b$ that consists of predictable $v:[0,\infty) \times \Om \to \RR$ that satisfy $v(t, \om)=0$ for $t>T$ by $\cla_{b,T}$, we have (cf. \cite[Section 8.1.4]{buddupbook}) that
\begin{align*}
	-\log \EE \exp\{-F(W)\} &\ge \inf_{v \in \cla_{b,T}} \EE \left[ \frac{1}{2} \int_0^{T} |v(s)|^2 ds + F_T\left(W(\cdot \wedge T) + \int_0^{\cdot \wedge T} v(s) ds\right)\right]\\
	&\ge \inf_{v \in \cla_{b}} \EE \left[ \frac{1}{2} \int_0^{\infty} |v(s)|^2 ds + F(W + \int_0^{\cdot} v(s) ds)\right].
\end{align*}
Next, let $F: \clc_0^{\infty} \to \RR$ be a general bounded measurable map.
Let $\{T_n\}_{n\ge 1}$ be an increasing sequence such that $T_n \uparrow \infty$. 
Define $\clg_n \doteq \sigma\{W(s): s \le T_n\}$. Note that by the martingale convergence theorem
$F_n \doteq \EE(F \mid \clg_n)$ converges a.s. to $F$ as $n\to \infty$. Also 
$\|F_n\|_{\infty} \le \|F\|_{\infty}$. Furthermore, since $F_n$ is $\clg_n$ measurable, there is a bounded measurable map $\tilde F_n: \clc_0^{T_n}(\RR) \to \RR$ such that $F_n(W(\cdot)) = 
\tilde F_n(W(\cdot \wedge T_n))$ a.s. This also says that for any $v \in \cla_b$,
$F_n(W(\cdot)+ \int_0^{\cdot} v(s) ds) = \tilde F_n(W(\cdot \wedge T_n)+ \int_0^{\cdot \wedge T_n} v(s) ds)$ a.s.
It thus follows that
$$-\log \EE \exp\{-F_n(W)\} \ge \inf_{v \in \cla_{b}} \EE \left[ \frac{1}{2} \int_0^{\infty} |v(s)|^2 ds + F_n(W + \int_0^{\cdot} v(s) ds)\right]$$
a.s.
Let $v_n \in \cla_b$ be $n^{-1}$-optimal for the right side. Then we have
$$-\log \EE \exp\{-F_n(W)\} \ge  \EE \left[ \frac{1}{2} \int_0^{\infty} |v_n(s)|^2 ds + 
F_n(W + \int_0^{\cdot} v_n(s) ds)\right] -n^{-1}.$$
The left side in the above display converges to $-\log \EE \exp\{-F(W)\}$ as $n\to \infty$.
Denoting by $\theta_n$ (resp. $\theta$) the probability law of $W + \int_0^{\cdot} v_n(s) ds$
(resp. $W$) on $\clc_0^{\infty}$, we have that, for every $n\in \NN$,
$$R(\theta_n \| \theta) \le \frac{1}{2} E\int_0^{\infty} |v_n(s)|^2 ds \le 2\|F\|_{\infty} +1.$$
This shows that (cf. \cite[Lemma 2.5]{buddupbook})
$$\lim_{n\to \infty} \left|F_n(W + \int_0^{\cdot} v_n(s) ds) - F(W + \int_0^{\cdot} v_n(s) ds)\right|= 0.$$
Combining the above
\begin{align*}
	-\log \EE \exp\{-F(W)\} &= \lim_{n\to \infty} -\log \EE \exp\{-F_n(W)\}\\ 
	&\ge \liminf_{n\to \infty}\EE \left[ \frac{1}{2} \int_0^{\infty} |v_n(s)|^2 ds + 
F_n(W + \int_0^{\cdot} v_n(s) ds)\right]\\ 
&= \liminf_{n\to \infty}\EE \left[ \frac{1}{2} \int_0^{\infty} |v_n(s)|^2 ds + 
F(W + \int_0^{\cdot} v_n(s) ds)\right]\\ 
&\ge \inf_{v \cla_b}\EE \left[ \frac{1}{2} \int_0^{\infty} |v(s)|^2 ds + 
F(W + \int_0^{\cdot} v(s) ds)\right].
\end{align*}
This completes the proof of the lower bound and thus the result follows.
\hfill \qed
\begin{remark}
	As in the proof of Theorem \ref{thm:simp}, the infimum in Theorem \ref{thm:infhor} can be taken over $\cla_s$ or $\tilde \cla_s$. Also,  a similar representation can be written for an infinite dimensional Brownian motion, e.g. for a $H$-valued Wiener process as in Section \ref{sec:HS}.
\end{remark}
\section{Functionals of Fractional Brownian Motion}
A representation similar to \eqref{rep:bodu} can be obtained for more general Gaussian processes than a Brownian motion. Fractional Brownian motions are an important class of Gaussian processes with long-range dependence that arise in many applications. In this section we present a variational formula for functionals of a fractional Brownian motion given in \cite{budsong} that has been used for studying large deviation properties of small noise stochastic differental equations with a fractional Brownian motion. The variational formula relies on a general result from \cite{zhang2009variational} on a variational representation for random functionals on abstract Wiener spaces. We begin by presenting this latter result and then describe how a representation for functionals of a fractional Brownian motion can be deduced from it.
\subsection{Abstract Wiener Space Representation.}
For simplicity we consider the time horizon $[0,1]$. The case of a general finite time horizon $[0,T]$ can be treated similarly.
Let $(\WW, \| \cdot\|_{\WW})$ be a separable Banach space. and let $(\HH, \langle \cdot, \cdot\rangle_{\HH}, \|\cdot\|_{\HH})$ be a separable Hilbert space densely and continuously embedded in $\WW$. Let $\mu$ be a centered, unit variance Gaussian measure over $\WW$ (cf. \cite{kuo2006gaussian}). Identifying in the usual manner the dual space $\HH^*$ with itself, $\WW^*$ may be viewed as a dense linear subspace of $\HH$ and we have that for any $\ell \in \WW^*$ and $h \in \HH$, $\ell(h) = \langle \ell, h\rangle_{\HH}$. Denoting the embedding map from $\HH$ into $\WW$ as $i_{\HH}$, the collection $(i_{\HH}, \HH, \WW, \mu)$ is referred to as an abstract Wiener space and $\HH$ is called to Cameron-Martin space of this abstract Wiener space (cf. \cite{G}).

We recall the notion of a filtration on an abstract Wiener space from \cites{ustzak, ustzakbook}.
A collection of projection operators $\{\pi_t, t \in [0,1]\}$ on $\HH$ are referred to as a continuous and strictly monotonic resolution  of the identity in $\HH$ if the following hold 
	\begin{enumerate*}[label=(\roman*)]
  \item $\pi_0 = 0$, $\pi_1 = \mbox{Id}$, where $\mbox{Id}$ is the identity operator, 
  \item for $0 \le s < t \le 1$, $\pi_s \HH$ is a strict subset of $\pi_t\HH$,
  \item for any $h \in \HH$ and $t \in [0,1]$, $\lim_{s\to t} \pi_s h = \pi_t h$.
\end{enumerate*}

By denseness of $\WW^*$ in $\HH$ we can find for every $h \in \HH$ a sequence $\{h_n\} \subset \WW^*$ such that $h_n \to h$. This says that the sequence $\{h_n\}$ regarded as a sequence of random variables on the probability space $(\WW, \clb(\WW), \mu)$ is a Cauchy sequence in $L^2(\mu)$ and thus must converge to a limit $\delta(h)$ in $L^2(\mu)$. This limit is referred to as the Skorohod integral of $h$ and we occasionally write $\delta(h)(w) = \langle h, w\rangle$ for $h \in \HH$ and $w \in \WW$.

Define the filtration $\{\clf_t\}_{0\le t \le 1}$  as $\clf_t \doteq \sigma\{\delta(\pi_th), h \in \HH\} \vee \cln$, where $\cln$ is the collection of all $\mu$-null sets in $\clb(\WW)$. We will regard this as a filtration on the $\mu$-complete probability space  $(\WW, \clf, \mu)$, where $\clf \doteq \clf_1$. Expecteation on this probability space will be denoted as $\EE$.

An $\HH$ valued random variable $v$ on the above probability space is said to be $\clf_t$-adapted if for every $t \in [0,1]$ and $h \in \HH$, $\langle \pi_t h, v\rangle_{\HH}$ is  $\clf_t$ measurable.
The collection of  adapted $\HH$ valued square integrable  random variables (i.e $\EE\|v\|_{\HH}^2 <\infty$) is denoted by $\clh^a$. For $N \in \NN$, denote by 
$S_N$ the ball of radius $N$ in $\HH$ and let 
$\clh^a_{b,N}$ the subcollection of $\clh^a$ consisting of $v$ that take values in $S_N$. Let $\clh^a_b \doteq \cup_{N>0} \clh^a_{b,N}$.
For $t \in [0,1]$, let $\clc_t$ be the collection of cylindrical functions of the form
$$F(w) = g (\langle \pi_t h_1, w\rangle, \cdots , \langle \pi_t h_n, w\rangle), \; g \in C_b^{\infty}(\RR^n), \; h_1, \ldots h_n \in \WW^*.$$
Here $C_b^{\infty}(\RR^n)$ is the space of real infinitely differentiable functions on $\RR^n$, with the function and all its derivatives bounded.
Note that such a $F$ is $\clf_t$ measurable.
A $v \in \clh^a$ is said to be simple if it takes values in $S_N$ for some $N<\infty$ and for some $0= t_0<t_1<\cdots < t_n=1$, $\{h_i\}_{0\le i \le n-1} \subset \HH$, and $\xi_{i} \in \clc_{t_i}$, $i=0, 1, \ldots n-1$,
$$v(w) = \sum_{i=0}^{n-1} \xi_i(w) (\pi_{t_{i+1}}- \pi_{t_i}) h_i.$$
We denote the collection of all such simple $v$ as $\cls^a$.
The following is the main representation from \cite{zhang2009variational}  (see Theorem 3.2 therein).
\begin{theorem}\label{thm:zhang}
	Let $F$ be a real bounded and measurable function on $\WW$. Then
	$$-\log \EE (e^{-F}) = \inf_{v \in \clr} \EE\left (\frac{1}{2} \|v\|^2_{\HH} + F(\cdot + v)\right)$$
	where $\clr \in \{\clh^a, \cls^a\}$.
\end{theorem}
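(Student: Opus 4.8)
The plan is to follow the two-sided structure of the classical Boué--Dupuis argument, replacing the Itô calculus on Wiener space by the anticipating calculus and the Girsanov theorem on the abstract Wiener space $(i_{\HH}, \HH, \WW, \mu)$ from \cites{ustzak, ustzakbook}, and using the Donsker--Varadhan variational formula \cite[Proposition 2.2]{buddupbook} as the bridge between the log-Laplace functional and relative entropy. As in the proof of Theorem \ref{thm:simp}, I would first establish the identity for continuous (indeed cylindrical) bounded $F$ and then remove the continuity assumption by an approximation argument based on \cite[Lemma 2.5]{buddupbook}.

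First I would prove the upper bound $-\log \EE(e^{-F}) \le \inf_{v \in \clr} \EE(\frac{1}{2}\|v\|_{\HH}^2 + F(\cdot + v))$, observing that it suffices to establish, for every $v \in \clh^a$, the per-control inequality $-\log \EE(e^{-F}) \le \EE(\frac{1}{2}\|v\|_{\HH}^2 + F(\cdot + v))$; since $\cls^a \subset \clh^a$, this single inequality yields the upper bound for both choices of $\clr$. I may assume the cost is finite, and by truncation reduce to $v \in \clh^a_b$. The key point is that because $v$ is $\clf_t$-adapted, its Skorohod integral $\delta(v)$ reduces to a non-anticipating integral, so that $\Lambda_v \doteq \exp(\delta(v) - \frac{1}{2}\|v\|_{\HH}^2)$ is a genuine probability density and the shift $T_v(w) \doteq w + v(w)$ pushes $\mu$ forward to a measure $\gamma_v \doteq \mu \circ T_v^{-1}$ satisfying $R(\gamma_v \| \mu) \le \frac{1}{2}\EE_{\mu}\|v\|_{\HH}^2$; this is exactly where the Girsanov theorem for adapted shifts on an abstract Wiener space enters. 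Feeding $\gamma_v$ into Donsker--Varadhan then gives $-\log \EE(e^{-F}) \le R(\gamma_v \| \mu) + \EE_{\gamma_v} F \le \EE_{\mu}(\frac{1}{2}\|v\|_{\HH}^2 + F(\cdot + v))$.

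For the lower bound I would argue first over the larger class $\clh^a$ and then descend to $\cls^a$ by approximation. The Donsker--Varadhan formula gives $-\log \EE(e^{-F}) = R(\gamma^* \| \mu) + \EE_{\gamma^*} F$ with $d\gamma^*/d\mu = e^{-F}/\EE(e^{-F})$, a measure of finite relative entropy because $F$ is bounded. The crux is to realize $\gamma^*$ (or any $\gamma \ll \mu$ of finite entropy) as the law of an adapted shift: forming the strictly positive $\clf_t$-martingale $M_t \doteq \EE_{\mu}(d\gamma^*/d\mu \mid \clf_t)$ and applying the martingale representation theorem relative to the resolution $\{\pi_t\}$, I would extract an $\clf_t$-adapted $v^* \in \clh^a$ for which $T_{v^*}$ transports $\mu$ to $\gamma^*$ and the sharp entropy identity $R(\gamma^* \| \mu) = \frac{1}{2}\EE_{\mu}\|v^*\|_{\HH}^2$ holds; this yields $-\log \EE(e^{-F}) = \EE_{\mu}(\frac{1}{2}\|v^*\|_{\HH}^2 + F(\cdot + v^*)) \ge \inf_{\clh^a}(\cdots)$, hence equality for $\clr = \clh^a$. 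To upgrade this to the smaller class $\cls^a$, I would truncate $v^*$ into $\clh^a_b$ and approximate it by simple adapted processes in $\cls^a$, using the density of $\cls^a$ in $\clh^a_b$ in the appropriate topology and passing the functional $\EE F(\cdot + v)$ to the limit via the uniform-entropy criterion \cite[Lemma 2.5]{buddupbook} after first reducing $F$ to a continuous function.

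I expect the main obstacle to be the lower-bound construction step: producing, from a finite-entropy density, an $\clf_t$-adapted Cameron--Martin-valued control achieving the entropy identity $R(\gamma^* \| \mu) = \frac{1}{2}\EE_{\mu}\|v^*\|_{\HH}^2$ rather than merely an inequality. In finite dimensions this is the combination of Girsanov's theorem with the Clark--Ocone/martingale representation, but on an abstract Wiener space the Skorohod integral is not a pathwise object, so one must work with the resolution $\{\pi_t\}$ and the Üstünel--Zakai calculus and verify carefully that the drift extracted from $M_t$ is genuinely non-anticipating, so that the anticipating corrections to the Girsanov density vanish. The secondary technical difficulty is the simple-process approximation: controlling $\EE F(\cdot + v)$ along the approximating sequence requires the reduction to continuous $F$ together with a weak-convergence-plus-uniform-relative-entropy argument, exactly of the type used in the proof of Theorem \ref{thm:simp}.
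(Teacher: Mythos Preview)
The paper does not prove this theorem at all: it is stated as a citation of \cite[Theorem 3.2]{zhang2009variational} and no argument is given. So there is no ``paper's own proof'' to compare against. Your proposal is a faithful outline of the Bou\'e--Dupuis strategy transported to an abstract Wiener space via the \"Ust\"unel--Zakai calculus, and this is indeed the route taken in \cite{zhang2009variational}; in that sense your sketch matches the source the paper points to, even though the present paper itself provides nothing beyond the reference.
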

\subsection{Fractional Brownian Motion Representation.}
We will now use the representation in Theorem \ref{thm:zhang}  to provide a more explicit representation for a fractional Brownian motion. We begin by recalling some basic definitions.
For  $H\in(0,1)$, a $d$-dimensional fractional Brownian motion (fBm) $B^H=\{B^H_t:\,t\in[0,1]\}$ with Hurst parameter $H$ defined on some complete probability space $(\Omega, \mathcal{F}, \mathbb{P})$ is a centered Gaussian process whose covariance matrix $R_H=(R_H^{i,j})_{1\leq i,\,j\leq d}$ is given by 
\begin{equation}\label{covariance}
R_H^{i,j}(s, t)=\mathbb{E}(B^{H,i}_sB^{H,j}_t)=\frac{1}{2}(s^{2H}+t^{2H}-|t-s|^{2H})\delta_{i,j},
\; s,\,t\in[0,1],
\end{equation}
 where $\delta$ is the Kronecker delta function.  When $H=\frac{1}{2}$, the above process  is simply a $d$-dimensional standard Brownian motion. 
From the above covariance formula it is immediate  that
\begin{equation*}
\mathbb{E}(\vert B_t^H-B_s^H\vert^2)=d\vert t-s\vert^{2H},\; t,s \in [0,1].
\end{equation*}
From the above property along with Kolmogorov's continuity criterion it follows that the sample paths of $B^H$ are a.s. H\"{o}lder continuous of order $\beta$ for all $\beta<H$.

In what follows $(\Omega,\mathcal{F},\mathbb{P})$ will denote the canonical probability space, where $\Omega=C_0([0,1]:\mathbb{R}^d)$ is the space of continuous functions null at time $0$, equipped with topology of uniform convergence, $\mathcal{F}=\mathcal{B}(C_0([0,1]:\,\mathbb{R}^d))$ is the Borel $\sigma$-algebra and $\mathbb{P}$ is the unique $d$-dimensional probability measure such that the  canonical process $B^H=\{B^H_t(\omega)=\omega(t):\,t\in[0,1]\}$ is a $d$-dimensional fractional Brownian motion with Hurst parameter $H$. Consider the canonical filtration given by $\{\mathcal{F}^H_t:\,t\in[0,1]\}$, where $\mathcal{F}^H_t =\sigma\{B^H_s:\,0\leq s\leq t\}\vee \mathcal{N}$ and $\mathcal{N}$ is the set of the $\mathbb{P}$-negligible events.

 We now recall a representation for a fBM in terms of a standard Brownian motion and a suitable kernel function (cf. \cite{DU}). Let $F(a,b,c;z)$ denote the Gauss hypergeometric function defined for any $a,\,b,\,c,\, z\in\mathbb{C}$ with $|z|<1$ and $c\neq 0,-1,-2,\dots$ by
\[
F(a,b,c;z)=\sum_{k=0}^{\infty}\frac{(a)_k(b)_k}{(c)_k}z^k,
\]
where $(a)_0=1$ and $(a)_k=a(a+1)\dots(a+k-1)$ is the Pochhammer symbol.

Let $B=\{B_t=(B_t^1,\dots,B_t^d),\,t\in[0,1]\}$ be a standard $d$-dimensional Brownian motion.
Then it is well known (cf. \cite{DU} ) that the process
\begin{equation}\label{bm-2-fbm}
B_t^H= \int_0^1K_H(t,s)dB_s, \; t \in [0,1]
\end{equation}
defines a fBm with Hurst parameter $H$, where 
  \begin{equation}\label{K-H}
K_H(t,s)= k_H(t,s) \mathbf{1}_{[0,t]}(s),
\end{equation}
for $0\le s\le t$
\begin{equation}\label{eq:khts}
k_H(t,s) = \frac{c_H}{\Gamma\left(H+\frac{1}{2}\right)}(t-s)^{H-\frac{1}{2}}F\left(H-\frac{1}{2},\frac{1}{2}-H,H+\frac{1}{2};1-\frac{t}{s}\right),
\end{equation}
$c_H=\left[\frac{2H\Gamma\left(\frac{3}{2}-H\right)\Gamma\left(H+\frac{1}{2}\right)}{\Gamma(2-2H)}\right]^{1/2}$,
 and $\Gamma(\cdot)$ is the gamma function. \newline

Define $\mathcal{H}_H=\{(K_H\dot{h}^1,\dots,K_H\dot{h}^d):\,\dot{h}=(\dot{h}^1,\dots,\dot{h}^d)\in L^2([0,1]:\RR^d)\}$, 
where 
$$(K_Hf)(t) \doteq \int_0^1K_H(t,s)f(s)ds \mbox{ for } f \in L^2([0,1]:\RR^d),\; t \in [0,1].$$
That is, any $h\in\mathcal{H}_H$ can be represented as
\[
h(t)=(K_H\dot{h})(t)=\int_0^1K_H(t,s)\dot{h}(s)ds,
\]
for some $\dot{h}\in L^2([0,1]:\RR^d)$. Define a scalar inner product on $\mathcal{H}_H$ by 
\[
\langle h, g\rangle_{\mathcal{H}_H}=\langle K_H\dot{h}, K_H\dot{g}\rangle_{\mathcal{H}_H}=\langle \dot{h}, \dot{g}\rangle_{L^2}.
\]

Then $\mathcal{H}_H$ is a separable Hilbert space with the inner product $\langle \cdot,\cdot\rangle_{\mathcal{H}_H}$. It can be checked (cf. \cite[Remark 1]{budsong}) that $\mathcal{H}_H$ is a subset of $ \Omega=C_0([0,1]:  \mathbb{R}^d)$ and the embedding map is continuous.
We now present the main variational representation for functionals of fBm from \cite{budsong}.
For  $0<N<\infty$, let 
$S_N=\{v\in\mathcal{H}_H: \,\frac{1}{2}\Vert v\Vert^2_{\mathcal{H}_H}\leq N\}$. 

Let $\cla$ be the collection of all measurable $v: \Om \to \clh_H$ such that there is a 
$\clf^H_t$-predictable $\dot{v}: [0,T]\times \Om \to \RR$ such that
$\EE \int_0^1 \|\dot{v}(t)\|^2 dt <\infty$ and $v\om) = K_H \dot{v}(\om)$ a.s. 
The subcollection of $\cla$ consisting of $v$ that take values in $S_N$ will be denoted as $\cla_{b,N}$ and, as before,
$\cla_b = \cup_{N>0}\cla_{b,N}$.
The following is \cite[Proposition 2]{budsong}.
\begin{theorem}\label{representation}
Let $F$ be a real bounded measurable function on $\Omega$. Then we have
\begin{align*}
-\log\mathbb{E}(e^{-F(B^H)})=&\inf_{v\in\clr}\mathbb{E}\left(f(B^H+v)+\frac{1}{2}\Vert v\Vert^2_{\mathcal{H}_H}\right),
\end{align*}
where $\clr \in \{\cla, \cla_b\}$. 
\end{theorem}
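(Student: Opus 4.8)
The plan is to realize the fractional Brownian motion as the coordinate process on an abstract Wiener space and then read off the claim from Theorem~\ref{thm:zhang}, after matching the abstract adapted and simple classes $\clh^a$, $\cls^a$ to the concrete control classes $\cla$, $\cla_b$. First I would take $\WW = \Omega = C_0([0,1]:\RR^d)$, $\HH = \mathcal{H}_H$, and $\mu = \PP$. The continuous dense embedding $\mathcal{H}_H \hookrightarrow \Omega$ is already recorded in the excerpt (cf.\ \cite[Remark 1]{budsong}), so the only point to check is that $\PP$ is a centered Gaussian measure on $\Omega$ whose Cameron--Martin space is exactly $\mathcal{H}_H$ with the stated inner product; this is standard given the kernel representation \eqref{bm-2-fbm} and the isometry $K_H\dot h \mapsto \dot h$. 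Under this identification the coordinate process is $B^H$, the Cameron--Martin shift $w \mapsto w+v$ with $v \in \mathcal{H}_H$ becomes $B^H + v$, and the abstract norm is $\|\cdot\|_{\mathcal{H}_H}$, so that the right-hand side of Theorem~\ref{thm:zhang} already has the form appearing in the statement.

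The key step is to produce the resolution of the identity and verify that the filtration it induces is the fractional Brownian filtration $\{\mathcal{F}_t^H\}$. I would set $\pi_t(K_H\dot h) \doteq K_H(\one_{[0,t]}\dot h)$; transporting everything to $L^2([0,1]:\RR^d)$ via the isometry, $\pi_t$ is just multiplication by $\one_{[0,t]}$, so properties (i)--(iii) are immediate: $\pi_0 = 0$ and $\pi_1 = \id$; strict monotonicity follows from $L^2([0,s]) \subsetneq L^2([0,t])$ for $s<t$; and continuity from $\|\pi_s h - \pi_t h\|_{\mathcal{H}_H}^2 = \int_{s\wedge t}^{s\vee t}\|\dot h\|^2\,dr \to 0$. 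Next I would identify the abstract divergence: for $h = K_H\dot h$ the Paley--Wiener integral $\delta(h)$ equals the It\^o/Wiener integral $\int_0^1 \dot h(s)\,dB_s$ against the driving Brownian motion $B$ of \eqref{bm-2-fbm}, whence $\delta(\pi_t h) = \int_0^t \dot h\,dB$ and $\clf_t = \sigma\{\delta(\pi_t h): h \in \mathcal{H}_H\}\vee\cln = \sigma\{B_s: s\le t\}\vee\cln$. Since the Volterra (lower-triangular) kernel $K_H$ in \eqref{bm-2-fbm} is boundedly invertible, $B$ and $B^H$ generate the same filtration; invoking this (classical) fact gives $\clf_t = \mathcal{F}_t^H$. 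Consequently the abstract adaptedness condition ``$\langle \pi_t h, v\rangle_{\mathcal{H}_H}$ is $\clf_t$-measurable for all $h,t$'' translates, writing $v = K_H\dot v$, into the statement that $\dot v$ is $\mathcal{F}_t^H$-adapted, so that $\clh^a$ corresponds exactly to $\cla$.

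With these identifications, Theorem~\ref{thm:zhang} applied with $\clr = \clh^a$ yields the representation over $\cla$. To obtain it over the smaller class $\cla_b$ I would use a squeeze built from the simple processes. A simple process $v(w) = \sum_i \xi_i(w)(\pi_{t_{i+1}} - \pi_{t_i})h_i \in \cls^a$ takes values in a ball $S_N$ and equals $K_H\dot v$ with $\dot v = \sum_i \xi_i\,\dot h_i\,\one_{(t_i,t_{i+1}]}$ predictable and bounded; hence its image lies in $\cla_b$. Therefore, under the correspondence, the image of $\cls^a$ is contained in $\cla_b \subseteq \cla$, giving $\inf_{\cla} \le \inf_{\cla_b} \le \inf_{\cls^a}$, while Theorem~\ref{thm:zhang} gives $\inf_{\clh^a} = \inf_{\cls^a} = -\log\EE(e^{-F(B^H)})$; all three infima therefore coincide, which establishes the claim for $\clr \in \{\cla, \cla_b\}$.

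The main obstacle is the second paragraph, specifically the filtration identification $\clf_t = \mathcal{F}_t^H$: one must both match the abstract Wiener divergence $\delta$ with the Wiener integral against the driving Brownian motion $B$ and use the non-trivial fact that $B^H$ and $B$ generate the same filtration, which rests on the bounded invertibility of the Volterra kernel $K_H$ (this is where the two cases $H>1/2$ and $H<1/2$ would, if one proved it from scratch rather than citing, require separate care).
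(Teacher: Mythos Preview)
Your proposal is correct and follows essentially the same route as the paper: realize $(\Omega,\mathcal{H}_H,\PP)$ as an abstract Wiener space, exhibit the projection family $\pi_t(K_H\dot h)=K_H(\one_{[0,t]}\dot h)$ as a continuous strictly monotonic resolution of the identity, identify the induced filtration with $\{\mathcal{F}_t^H\}$, and then read the result off from Theorem~\ref{thm:zhang}. The paper packages the filtration identification and the passage from abstract adaptedness to the existence of a predictable $\dot v$ into a lemma cited from \cite{DU} (Lemma~\ref{filtration}), whereas you sketch the argument directly via the driving Brownian motion $B$ and the equality of the $B$- and $B^H$-filtrations; the content is the same.

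One tactical difference worth noting: for the class $\cla_b$, the paper simply asserts that $\cla_b$ coincides with $\clh^a_b$ and then invokes Theorem~\ref{thm:zhang}. Strictly speaking, Theorem~\ref{thm:zhang} as stated covers only $\clr\in\{\clh^a,\cls^a\}$, so an implicit squeeze $\cls^a\subset\clh^a_b\subset\clh^a$ is being used. Your argument makes this squeeze explicit, and in fact you avoid having to identify $\cla_b$ with $\clh^a_b$ altogether by observing that the image of $\cls^a$ already lands in $\cla_b$; this is a slightly cleaner way to close the gap.
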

\begin{remark}
	Although not discussed here, one can also replace the above infimum by the infimum over the subclass of $\cla_b$ consisting of all simple processes.
\end{remark}
We provide a brief proof sketch of this result. We refer the reader to \cite{budsong} for details. The basic idea is to identify a suitable abstract Wiener space and then apply Theorem \ref{thm:zhang}.

It is easy to verify that functions in $\mathcal{H}_H$ are  $H$-H\"{o}lder continuous (cf. \cite[Lemma 2]{budsong}), in particular we have the following fact:
Any $h\in\mathcal{H}_H$ is in  $C^H([0,1]:\RR^d)$, and $\Vert h\Vert_\infty\leq  \Vert h\Vert_{\mathcal{H}_H}$ and $\Vert h\Vert_{H}\leq  \Vert h\Vert_{\mathcal{H}_H}$. 

Next, recall that $\Omega=C_0([0,1]:\RR^d)$ is a Banach space equipped with the sup-norm $\Vert \cdot \Vert_\infty$.  Let $\Omega^\ast$ be its topological dual. 
We now introduce the abstract Wiener space associated with a fractional Brownian motion.
The following result  is taken from \cite[Theorem 3.3]{DU}.
\begin{lemma}If we  identify  $L^2([0,1]:\RR^d)$ and its dual, we have the following diagram
\[
\Omega^\ast\overset{i_H^\ast}{ \xrightarrow{\hspace{1cm}}} \mathcal{H}_H^\ast\overset{K_H^\ast}{ \xrightarrow{\hspace{1cm}}}L^2([0,1]:\RR^d)\overset{K_H}{ \xrightarrow{\hspace{1cm}}}\mathcal{H}_H\overset{i_H}{ \xrightarrow{\hspace{1cm}}}\Omega
\]
Where 
 $i_H$ is the injection from $\mathcal{H}_H$ into $\Omega$, and $K_H^\ast$ and $i_H^\ast$ are the respective adjoints. 
\begin{itemize}
\item[(a)] The injection $i_H$ embeds $\mathcal{H}_H$ densely into $\Omega$, and $\mathcal{H}_H$ is the Cameron-Martin space of the abstract Wiener space $(i_H,\mathcal{H}_H, \Omega, \PP)$ in the sense of Gross \cite{G}.
\item[(b)] The restriction of $K_H^\ast$ to $\Omega^\ast$ can be represented by 
\[
(K_H^\ast\eta)(s)=\int_0^1K_H(t,s)\eta(dt)=\left(\int_0^1K_H(t,s)\eta_1(dt),\dots,\int_0^1K_H(t,s)\eta_d(dt)\right), 
\]
for any $\eta=(\eta_1,\dots,\eta_d)\in\Omega^\ast$.
\end{itemize}
\end{lemma}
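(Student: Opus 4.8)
The plan is to prove the lemma by specializing the general theory of Cameron--Martin spaces of centered Gaussian measures (cf.\ \cite{kuo2006gaussian, G}) to the law $\mathbb{P}$ of the fractional Brownian motion $B^H$ on $\Omega = C_0([0,1]:\RR^d)$, and to route every identification through the kernel factorization of the covariance $R_H$ furnished by the representation \eqref{bm-2-fbm}. Recall that for a centered Gaussian measure $\mu$ on a separable Banach space $\WW$, the Cameron--Martin space is $\overline{R_\mu(\WW^*)}$, where $R_\mu:\WW^*\to\WW$, $R_\mu\eta = \int_{\WW} w\,\langle\eta,w\rangle\,\mu(dw)$, is the covariance operator, and the completion is taken under $\langle R_\mu\eta, R_\mu\eta'\rangle = \int_{\WW}\langle\eta,w\rangle\langle\eta',w\rangle\,\mu(dw)$; this space embeds continuously and densely into the topological support of $\mu$. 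The entire proof amounts to identifying, isometrically, this abstract object with the concrete space $\mathcal{H}_H = K_H(L^2([0,1]:\RR^d))$.

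I would establish part (b) first, since it feeds part (a). For $\eta \in \Omega^*$, viewed as an $\RR^d$-valued finite signed measure on $[0,1]$, and for $\dot h \in L^2([0,1]:\RR^d)$, I would write $\int_0^1 (K_H\dot h)(t)\cdot \eta(dt) = \int_0^1\!\!\int_0^1 K_H(t,s)\dot h(s)\,ds\,\cdot\eta(dt)$ and interchange the order of integration by Fubini's theorem, which is legitimate since $\sup_{t\in[0,1]}\|K_H(t,\cdot)\|_{L^2} = \sup_{t}t^{H} < \infty$ and $\eta$ is finite. This yields $\int_0^1 \dot h(s)\cdot(K_H^*\eta)(s)\,ds$ with $(K_H^*\eta)(s) = \int_0^1 K_H(t,s)\eta(dt)$, which is exactly (b), and it simultaneously records the action of the first two arrows of the diagram.

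For part (a) the key computation is that the covariance operator of $\mathbb{P}$ factorizes as $R_{\mathbb{P}} = K_H \circ K_H^*$. Indeed, the It\^o isometry applied to \eqref{bm-2-fbm} gives $R_H(s,t) = \int_0^1 K_H(s,r)K_H(t,r)\,dr$ (this is precisely the assertion that \eqref{bm-2-fbm} reproduces the covariance \eqref{covariance}); substituting this into $(R_{\mathbb{P}}\eta)(t) = \int_0^1 R_H(t,u)\eta(du)$ and using part (b) yields $R_{\mathbb{P}}\eta = K_H(K_H^*\eta)$, and the same substitution gives $\int_{\Omega}\langle \eta, w\rangle \langle \eta', w\rangle\,\mathbb{P}(dw) = \langle K_H^*\eta, K_H^*\eta'\rangle_{L^2}$. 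Since $K_H$ is by construction an isometry of $L^2$ onto $\mathcal{H}_H$, these two identities show that the Cameron--Martin inner product agrees with $\langle\cdot,\cdot\rangle_{\mathcal{H}_H}$ on $R_{\mathbb{P}}(\Omega^*) = K_H(K_H^*\Omega^*)$.

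It remains to pass from this dense set to all of $\mathcal{H}_H$ and to obtain denseness in $\Omega$. For the former I would show $K_H^*(\Omega^*)$ is dense in $L^2$: point evaluations $\delta_t e_i \in \Omega^*$ give $K_H^*(\delta_t e_i) = K_H(t,\cdot)e_i$, and if $g\in L^2$ is orthogonal to all of these then $(K_H g)(t)=0$ for every $t$, so $g=0$ by injectivity of the isometry $K_H$; hence $K_H(K_H^*\Omega^*)$ is dense in $\mathcal{H}_H$, and taking closures in the agreeing norms shows the Cameron--Martin space equals $\mathcal{H}_H$. Denseness of $i_H(\mathcal{H}_H)$ in $\Omega$ then follows since the Cameron--Martin space is dense in the support of $\mathbb{P}$ and the support of fBm is all of $C_0$, while continuity of $i_H$ is the already-recorded bound $\|h\|_\infty \le \|h\|_{\mathcal{H}_H}$. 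I expect the \emph{main obstacle} to be the factorization step---correctly matching the abstract covariance operator $R_{\mathbb{P}}$ with the concrete composition $K_H K_H^*$ and justifying the Fubini interchange---after which the isometry property of $K_H$ renders the remaining identifications and the density arguments essentially formal.
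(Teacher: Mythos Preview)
The paper does not prove this lemma; it is quoted verbatim from \cite[Theorem~3.3]{DU} with no argument given. Your proposal therefore supplies strictly more than the paper does, and the route you take---factorize the covariance as $R_{\mathbb P}=K_H\circ K_H^*$ via the It\^o isometry applied to \eqref{bm-2-fbm}, then read off that the abstract Cameron--Martin inner product coincides with $\langle\cdot,\cdot\rangle_{\mathcal H_H}$ on $R_{\mathbb P}(\Omega^*)$, and close up using density of $K_H^*(\Omega^*)$ in $L^2$---is exactly the argument behind the cited result. Your treatment of part~(b) via Fubini, with the bound $\sup_t\|K_H(t,\cdot)\|_{L^2}=\sup_t t^{H}<\infty$, is correct and cleanly written.

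There is one genuine gap. Your argument for the denseness of $i_H(\mathcal H_H)$ in $\Omega$ is circular: you invoke that the Cameron--Martin space is dense in the topological support of $\mathbb P$ and that the support of fBm equals $C_0$. But for a centered Gaussian measure the support \emph{is} the closure of the Cameron--Martin space, so ``support of fBm $=C_0$'' is exactly the statement ``$\mathcal H_H$ is dense in $C_0$'' that you are trying to prove. The honest way to close this is to show directly that $K_H^*\!\upharpoonright_{\Omega^*}$ (equivalently $i_H^*$) is injective: if $(K_H^*\eta)(s)=\int_s^1 k_H(t,s)\,\eta(dt)=0$ for a.e.\ $s$, then $\eta=0$ in $\Omega^*$. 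This is where the specific structure of the kernel \eqref{eq:khts} enters---$K_H$ is a composition of fractional integration and power-multiplication operators, each injective with injective adjoint (see \cite{DU})---and it does not follow from the general Gaussian-measure machinery you have been using. Once you have this injectivity, Hahn--Banach gives density of $\mathcal H_H$ in $\Omega$ immediately, and the rest of your proof stands.
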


\bigskip
As before, denote for  $h\in\mathcal{H}_H$ its Skorohod integral by $\delta(h)$.
Recall the filtered probability space $(\Omega,\mathcal{F},\mathbb{P},\{\mathcal{F}^H_t\})$
introduced above and recall that 
$\{B^H_t\}$ is the canonical coordinate process
on $(\Omega,\mathcal{F})$.

Define the family $\{\pi_t^H,\, t\in[0,1]\}$ of projection operators in $\mathcal{H}_H$ by
\begin{equation}\label{eq:projfam}
\pi_t^Hh=\pi_t^H(K_H\dot{h})=K_H(\dot{h}\mathbf{1}_{[0,t]}), \; h\in\mathcal{H}_H.
\end{equation}

 

 The following result is a consequence of
  \cite[Proposition 4.4, Theorems 4.3 and 4.8]{DU}.

\begin{lemma}\label{filtration} 
\begin{itemize}
\item[(a)] For any $t\in[0,1]$,  $\mathcal{F}_t^H=\sigma\{\delta(\pi^H_t h),\,h\in\mathcal{H}_H\}\vee \mathcal{N}$. 
\item[(b)] For any  $\mathcal{H}_H$-valued $\{\mathcal{F}^H_t,\,t\in[0,1]\}$-adapted stochastic process $u$ there is a $\{\mathcal{F}^H_t,\,t\in[0,1]\}$-predictable process $\dot{u}: [0,T]\times \Om \to \RR^d$ such that $u = K_H \dot{u}$  a.s.
\end{itemize}
\end{lemma}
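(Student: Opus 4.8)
The plan is to transport everything back to the standard Brownian motion $B$ driving the representation \eqref{bm-2-fbm}, to recognize the Skorohod integrals $\delta(\pi_t^H h)$ as ordinary Wiener integrals against $B$, and then to invoke the causal invertibility of the Volterra kernel $K_H$ to match the two filtrations. Throughout I would use that $K_H:L^2([0,1]:\RR^d)\to\clh_H$ is an isometric isomorphism, that $\pi_t^H(K_H\dot h)=K_H(\dot h\one_{[0,t]})$, and that for a deterministic $h=K_H\dot h$ the Skorohod integral reduces to the Wiener integral $\delta(\pi_t^H h)=\int_0^t\dot h(s)\,dB_s$, this last identity being the abstract-Wiener-space incarnation of \eqref{bm-2-fbm}.

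For part (a), write $\clg_t\doteq\sigma\{\delta(\pi_t^H h):h\in\clh_H\}\vee\mathcal{N}$. Letting $\dot h$ range over $L^2([0,1]:\RR^d)$, the family $\{\int_0^t\dot h(s)\,dB_s\}$ is exactly the set of Wiener integrals of functions supported on $[0,t]$, so $\clg_t=\mathcal{F}_t^B\vee\mathcal{N}$, where $\mathcal{F}_t^B=\sigma\{B_s:s\le t\}$. It then remains to identify $\mathcal{F}_t^B\vee\mathcal{N}$ with $\mathcal{F}_t^H$. The inclusion $\mathcal{F}_t^H\subseteq\mathcal{F}_t^B\vee\mathcal{N}$ is immediate from \eqref{bm-2-fbm} and the support property $K_H(t,s)=k_H(t,s)\one_{[0,t]}(s)$ of \eqref{K-H}, which exhibits $B^H_t$ as a Wiener integral of a function supported on $[0,t]$. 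The reverse inclusion is where the real content lies: one must show that $B$ can be recovered causally from $B^H$, i.e. that the inverse of $K_H$ is again a lower-triangular Volterra operator. This is precisely the statement underlying \cite[Theorems 4.3 and 4.8]{DU}, and granting it one obtains $\mathcal{F}_t^B\subseteq\mathcal{F}_t^H$ and hence equality.

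For part (b), given an $\clh_H$-valued $\{\mathcal{F}_t^H\}$-adapted process $u$, I would set $\dot u\doteq K_H^{-1}u\in L^2([0,1]:\RR^d)$, which is well defined a.s. since $K_H$ is an isomorphism, so that $u=K_H\dot u$ by construction and only predictability of $\dot u$ needs to be arranged. Unwinding the adaptedness hypothesis through $\langle\pi_t^H h,u\rangle_{\clh_H}=\langle\dot h\one_{[0,t]},\dot u\rangle_{L^2}=\int_0^t\dot h(s)\cdot\dot u(s)\,ds$, valid for every $\dot h$, shows that the absolutely continuous $\RR^d$-valued process $U(t)\doteq\int_0^t\dot u(s)\,ds$ is $\{\mathcal{F}_t^H\}$-adapted. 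A standard predictable-projection (or difference-quotient) argument then produces a predictable version of its density, which is the desired $\dot u$; this is the measurable-selection step contained in \cite[Proposition 4.4]{DU}.

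The main obstacle is the reverse inclusion in part (a): the causal invertibility of $K_H$. Unlike the forward direction, which is just the support property of the kernel, recovering $B$ from $B^H$ in a filtration-preserving way requires a genuine analysis of the fractional operator $K_H$, whose behaviour differs qualitatively for $H<1/2$ and $H>1/2$, and this is the step I would lean on \cite{DU} for. Once the causal identity $\mathcal{F}_t^B=\mathcal{F}_t^{B^H}$ is in hand, both parts of the lemma follow from the routine identifications above.
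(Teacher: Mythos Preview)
The paper does not give an independent proof of this lemma: it simply records that the statement ``is a consequence of \cite[Proposition 4.4, Theorems 4.3 and 4.8]{DU}'' and moves on. Your proposal is therefore not in competition with any argument in the paper; rather, you have sketched the content behind that citation. Your identification of the Skorohod integrals $\delta(\pi_t^H h)$ with Wiener integrals $\int_0^t \dot h\,dB$, the reduction of part (a) to the equality $\mathcal{F}_t^H=\mathcal{F}_t^B\vee\mathcal{N}$, and the recognition that the only nontrivial step is the causal (Volterra) invertibility of $K_H$ --- which you correctly defer to \cite{DU} --- is exactly the structure of the argument in that reference. Likewise your treatment of (b) via $\dot u=K_H^{-1}u$ and a predictable-version argument matches the role of \cite[Proposition 4.4]{DU}. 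In short, your approach and the paper's are the same: both rest on \cite{DU}, and you have accurately unpacked what that dependence consists of.
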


One can now complete the proof of Theorem \ref{representation} as follows.
The collection  $\{\pi_t^H,\, t\in[0,1]\}$ introduced in \eqref{eq:projfam} defines 
 a continuous and strictly monotonic resolution 
 of the identity in $\mathcal{H}_H$ in the sense described previously in the section.
 Also, using Lemma \ref{filtration},  the classes $\cla$ and $\cla_b$ in this section are the same as the classes $\clh^a$ and $\clh^a_b$ introduced  earlier in the section, when specialized to the abstract Wiener space for the setting of a fBM. The result is now immediate from Theorem \ref{thm:zhang}.
 \hfill \qed
 
 \section{L\'{e}vy Noise.}
 In this section we present the analogues of the Bou\'{e}-Dupuis variational formulas for L\'{e}vy processes. The results from this section are from \cite{buddupmar2}. They can also be found in \cite[Section 8.2]{buddupbook}. The statement in Theorems \ref{thm:jump} and \ref{thm:jumpdiff} that $\clr$ can be $\cla_s$ or $\tilde \cla_s$, although does not appear in these references, it can be easily deduced from the arguments there along the lines of the proof of Theorem \ref{thm:simp}. We omit the details.
 
 A L\'{e}vy noise is composed of a two random components, one consisting of a finite or an infinite dimensional Brownian motion and the other described through a suitable Poisson random measure(PRM). We begin by first presenting a representation of functionals of a PRM and then we will give the representation for the more general case where the functional depends on both a PRM and an infinite dimensional Brownian motion.
 
 \subsection{Representation for a Poisson Random Measure.}
 Let $\clx$ be a locally compact Polish space and let $\Sigma(\clx)$ denote the space of all locally-finite measures $\nu$ on $(\clx, \clb(\clx))$. The space $\Sigma(\clx)$ is equipped with the standard vague convergence topology. Fix $T \in (0,\infty)$ and let $\clx_T = [0,T]\times \clx$. Let $(\Om, \clf, \PP)$ be  a complete probability space with a filtration
 $\{\clf_t\}_{0\le t \le T}$ satisfying the usual assumptions. Fix $\nu \in \Sigma(\clx)$ and let $\nu_T \doteq \lambda_T \times \nu$ where $\lambda_T$ is the Lebesgue measure on $[0,T]$.

 Let $N$ be a $\clf_t$- Poisson random measure with points in $\clx_T$ and intensity measure $\nu_T$. Recall this means that $N$ is a $\Sigma(\clx_T)$ valued random variable with the following properties
 	\begin{enumerate*}[label=(\roman*)]
   \item for every $t \in [0,T]$ and $A  \in \clb([0,t]\times \clx)$, $N(A)$ is $\clf_t$-measurable,
   \item for every $t \in [0,T]$ and $A  \in \clb((t, T]\times \clx)$, $N(A)$ is independent of $\clf_t$,
   \item for any $k \in \NN$ and $A_1, \ldots A_k \in \clb(\clx_T)$ such that $A_i \cap A_j = \emptyset$ when $i\neq j$, and $\nu_T(\cup_{i=1}^k A_i) <\infty$, we have $N(A_1), \ldots N(A_k)$
   are mutually independent Poisson random variables with parameters $\nu_T(A_1), \ldots \nu_T(A_k)$, respectively.
 \end{enumerate*}

Fix $\theta >0$. Letting $\MM \doteq \Sigma(\clx)$, we will denote by $\PP_{\theta}$ the unique probability measure on $(\MM, \clb(\MM))$ under which the the canonical map $N:\MM \to \MM$ defined as $N(m)=m$ is PRM with intensity $\theta \nu_T$. In typical large deviation problems of interest $\theta$ plays the role of scaling parameter and one is interested in behavior of the system as $\theta$ approaches some limiting value (e.g. $\infty$ or $0$). In such problems one is interested in characterizing the asymptotic behavior of Laplace functionals of the form
$-\log \EE_{\theta} \exp\{-F(N)\}$, where $F: \MM \to \RR$ is some bounded measurable map. 

We will now present the variational representation from \cite{buddupmar2} that, together with weak convergence methods, allows one to study such asymptotic behavior in many problem settings (cf. references in \cite{buddupbook}). In formulating this variational representation it will be convenient to work with a PRM that is defined on a larger point space.

Let $\cly \doteq \clx \times [0, \infty)$ and $\cly_T \doteq [0,T]\times \cly$. Let $\bar \MM$ denote the space of locally finite measures on $\cly_T$. and let $\bar P$ be the unique measure on
$(\bar \MM, \clb(\bar \MM))$ such that the canonical map $\bar N: \bar \MM \to \bar \MM$ defined as $\bar N(m) \doteq m$, $m \in \bar \MM$, is a PRM with points in $\cly_T$ and intensity 
measure $\bar \nu_t \doteq \lambda_T \times \nu \times \lambda_{\infty}$, where $\lambda_{\infty}$ is the Lebesgue measure on $[0,\infty)$.

Let $\clg_t$ denote the augmentation of $\sigma \{\bar N((0,s]\times A): 0 \le s \le t, A \in \clb(\cly)\}$ with all $\bar \PP$ null sets in $\clb(\bar \MM)$ and denote by $\clp\clf$ the $\clg_t$-predictable $\sigma$-field on $[0,T]\times \bar \MM$.

Recall that the representation for functionals of Brownian motions, given e.g. in \eqref{rep:bodu}, involves $L^2$-controls $v$ and controlled noise processes $W + \int_0^{\cdot} v(s) ds$. 
We now introduce the controls and controlled noises that will play the analogous role in the  variational representations for PRM.
First the space of controls, which we denote by $\cla$ will be the collection of all maps $v: [0,T]\times \bar \MM \times \clx$ that are $(\clp\clf \otimes \clb(\clx))\backslash \clb([0,\infty))$ measurable.  For notational ease, for $(t,\om,x) \in [0,T]\times  \bar\MM \times \clx $, we will occasionally suppress $\om$ and write $v(t, \om, x)$ as $v(t,x)$.
 For $v \in \cla$, define a counting process $N^v$ on $\clx_T$ as
$$N^{v}((0,t]\times U) \doteq \int_{(0,t]\times U} \int_{(0,\infty)} \one_{[0, v(s,x)]}(r) \bar N(ds\times dx\times dr)$$
for all $t \in [0,T]$ and $U \in \clb(\clx)$. When, for some $\theta>0$, $v(s,\om, x) =\theta$ for all $(s,\om,x) \in \clx_T \times \bar\MM$, we will write $N^v$ as simply $N^{\theta}$.
The process $N^v$ can be thought of as the controlled noise process associated with the control $v$.  

Next, recall that the Brownian motion representation involved a quadratic cost term given as a suitable $L^2$-norm. In the representation for a PRM the quadratic function is replaced by the following superlinear (but sub-quadratic) function $\ell: [0,\infty) \to [0, \infty)$ defined as
$$\ell(r) \doteq r \log r - r +1, \; r \in [0,\infty).$$
For $v \in \cla$, define
$$L_T(v) \doteq \int_{\clx_T} \ell(v(t,x)) \nu_T(dt\times dx).$$
This is the cost term associated with a control $v$ that will replace the quadratic cost term we see in Brownian motion representation formulas, for the case of a PRM.

In the case of the Brownian motion, it was useful to consider the smaller collection of controls than $\cla$, namely $\cla_b$, which consisted of controls for which one can invoke the Girsanov theorem. We will  consider an analogous smaller collection of controls here as well. Let $\{K_n\}_{n \in \NN}$ be an increasing sequence of compact sets of $\clx$ such that $\cup_{n\ge 1} K_n = \clx$.
For $M \in (0, \infty)$, let
\begin{align*}
	\cla_{b,M} &\doteq \{v \in \cla: L_T(v) \le M, \mbox{ for some } n \in \NN, v(t,x) \in [1/n, n],\\
	&\quad\quad\quad \mbox{ and } v(t,\om,x) = 1\mbox{ if } x \in K_n^c, \mbox{ for all } (t,\om) \in [0,T] \times \bar \MM\}.
\end{align*}
A process in $\cla_{b,M}$ is called simple if the following holds: There exist $n, l, n_1, \ldots n_l \in \NN$; a partition $0=t_0< t_1 < \cdots < t_l=T$; for each $i=1, \ldots , l$ a disjoint
 measurable partition$\{E_{ij}\}_{1\le j \le n_i}$ of $K_n$; $\clg_{t_{i-1}}$- measurable random variables $X_{ij}$, $1\le i\le l$, $1\le j \le n_i$, such that $X_{ij}\in [1/n, n]$; and for
 $(t,\bar m, x) \in [0,T]\times  \Om \times \clx $
$$
v(t, \bar m, x) = \one_{\{0\}}(t) + \sum_{i=1}^l\sum_{j=1}^{n_i} \one_{(t_{i-1}, t_i]}(t) X_{ij}(\bar m) \one_{E_{ij}}(x) + \one_{K_n^c}(x) 1_{(0,T)}(t).$$
We denote the collection all such simple processes as $\cla_{s,M}$.
Define $\cla_b \doteq \cup_{M >0} \cla_{b,M}$ and $\cla_s \doteq \cup_{M >0} \cla_{s,M}$.
As in the Brownian motion case, when the canonical filtration $\{\clg_t\}$ is replaced by a larger filtration $\{\clf_t\}_{0\le t \le T}$ under which
$\bar N$ is a $\clf_t$-PRM with the same intensity (in such a case of course $\bar N$ is defined on a larger probability space than the canonical space $(\bar \MM, \clb(\bar \MM))$), we will denote the classes analogous to $\cla$, $\cla_b$ and $\cla_s$ as $\tilde \cla$, $\tilde \cla_b$, and $\tilde \cla_s$, respectively.
The following result is from \cite{buddupmar2} (see also \cite[Theorem 8.12]{buddupbook}).
\begin{theorem} \label{thm:jump}	
Let $F: \MM \to \RR$ be a bounded measurable map. Then for any $\theta>0$
$$-\log E_{\theta} \exp\{-F(N)\} = \inf_{v \in \clr} \bar \EE[ \theta L_T(v) + G(N^{\theta v})],$$
where $\clr \in \{\cla, \tilde \cla, \cla_b, \tilde \cla_b, \cla_s, \tilde \cla_s\}$.	
\end{theorem}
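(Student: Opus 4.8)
The representation for $\clr \in \{\cla, \tilde\cla, \cla_b, \tilde\cla_b\}$ is established in \cite{buddupmar2} (see also \cite[Theorem 8.12]{buddupbook}), so I would take this as given and only argue that the identity persists when $\clr = \cla_s$ or $\clr = \tilde\cla_s$. Since $\cla_s \subset \cla_b$ and $\tilde\cla_s \subset \tilde\cla_b$, the infimum over either simple class is no smaller than the common value already established for the larger classes; hence it suffices to prove the reverse inequality, namely that simple controls come arbitrarily close to that value. The whole argument would parallel the proof of Theorem \ref{thm:simp}, with the quadratic cost replaced by $\theta L_T(\cdot)$, the controlled Brownian path replaced by the controlled PRM $N^{\theta v}$, and the Cameron--Martin absolute continuity replaced by the Girsanov theorem for Poisson random measures. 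Once the $\cla_s$ case is settled, the $\tilde\cla_s$ case follows immediately by squeezing: $\cla_s \subset \tilde\cla_s \subset \tilde\cla_b$, so $\inf_{\tilde\cla_s}$ is trapped between two quantities equal to the common value.

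First I would treat a bounded continuous $F$. Here I would invoke the Poisson analogue of \cite[Lemma 3.5]{buddup3}, obtained from the discretization arguments in \cite{buddupmar2}, asserting that simple controls are dense in the sense that
\[
\inf_{v\in\cla_s}\bar\EE\left[\theta L_T(v) + F(N^{\theta v})\right] = \inf_{v\in\tilde\cla}\bar\EE\left[\theta L_T(v) + F(N^{\theta v})\right].
\]
Combined with Theorem \ref{thm:jump} for $\clr = \tilde\cla$, this gives the claim for continuous bounded $F$. Concretely, this density is produced by approximating a near-optimal $v^* \in \cla_b$ by piecewise-constant controls $v_k \in \cla_s$ with $L_T(v_k) \to L_T(v^*)$ and $N^{\theta v_k} \Rightarrow N^{\theta v^*}$, after which continuity and boundedness of $F$ let one pass to the limit.

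For a general bounded measurable $F$, I would choose bounded continuous $F_n$ with $\|F_n\|_\infty \le \|F\|_\infty$ and $F_n \to F$ a.s. under the law of $N^\theta$. Fixing $\veps>0$, pick $v^* \in \tilde\cla_b$ that is $\veps$-optimal over $\tilde\cla_b$. Because $v^* \in \tilde\cla_b$ takes values in $[1/n,n]$ and equals $1$ off a compact set, the PRM Girsanov theorem gives that $\cll(N^{\theta v^*})$ is mutually absolutely continuous with respect to $\cll(N^\theta)$; hence $F_n(N^{\theta v^*}) \to F(N^{\theta v^*})$ a.s., and $\lim_n \bar\EE[\theta L_T(v^*) + F_n(N^{\theta v^*})] \le \inf_{\tilde\cla_b}(\cdots) + \veps$. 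Applying the continuous-$F$ step to each $F_n$ I would select $v_n \in \cla_s$ with $\bar\EE[\theta L_T(v_n) + F_n(N^{\theta v_n})] \le \bar\EE[\theta L_T(v^*) + F_n(N^{\theta v^*})] + n^{-1}$, the $\veps$-optimality of $v^*$ keeping $\sup_n \bar\EE L_T(v_n) < \infty$. The Poisson analogue of \cite[Lemma 2.5]{buddupbook} then yields $\bar\EE|F_n(N^{\theta v_n}) - F(N^{\theta v_n})| \to 0$, and chaining the three estimates exactly as in Theorem \ref{thm:simp} gives $\inf_{\cla_s}(\cdots) \le \inf_{\tilde\cla_b}(\cdots) + \veps$. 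Letting $\veps \downarrow 0$ completes the $\cla_s$ case.

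The technical heart, and the step I expect to be hardest, is the Poisson version of the convergence lemma: showing $\bar\EE|F_n(N^{\theta v_n}) - F(N^{\theta v_n})| \to 0$ when only $F_n \to F$ a.s. and the uniform cost bound $\sup_n \bar\EE L_T(v_n) \le M$ are available. The mechanism is that $R(\cll(N^{\theta v_n}) \| \cll(N^\theta)) \le \theta\,\bar\EE[L_T(v_n)] \le \theta M$, so the laws $\cll(N^{\theta v_n})$ form a relative-entropy-bounded, hence uniformly integrable and tight, family; the superlinearity of $\ell$ is precisely what converts the cost bound into this compactness, and it is what lets a.s. convergence of $F_n$ be upgraded to the required $L^1$ control along the controlled measures. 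Verifying the PRM Girsanov theorem and the attendant entropy estimate under the truncation built into $\cla_b$ (values in $[1/n,n]$ and equal to $1$ outside $K_n$) is the other place where genuine care is needed, though both ingredients are available from \cite{buddupmar2}.
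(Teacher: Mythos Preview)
Your proposal is correct and matches the paper's own treatment exactly: the paper cites \cite{buddupmar2} and \cite[Theorem 8.12]{buddupbook} for $\clr \in \{\cla,\tilde\cla,\cla_b,\tilde\cla_b\}$ and, for the simple classes $\cla_s,\tilde\cla_s$, explicitly says the extension ``can be easily deduced from the arguments there along the lines of the proof of Theorem~\ref{thm:simp}'' while omitting the details. You have carried out precisely that deduction, including correctly identifying the PRM Girsanov theorem and the relative-entropy bound $R(\cll(N^{\theta v_n})\|\cll(N^\theta))\le \theta\,\bar\EE L_T(v_n)$ as the replacements for the Cameron--Martin absolute continuity and the quadratic-cost entropy estimate used in the Brownian case.
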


\subsection{General L\'{e}vy Process Representation.}
Finally, we present the representation for a general L\'{e}vy noise. Let $(\Om, \clf, \PP)$ be a probability space with a filtration $\{\clf_t\}_{0\le t \le T}$ satisfying the usual conditions and assume that this probability space supports the processes introduced below.  With $Q$ as in Section \ref{sec:HS}, let $W$ be a a  $Q$-Wiener process with respect to the filtration $\clf_t$.
Let $\nu, \clx, \clx_T, \cly_T, \bar \nu_T$ be as introduced above. Let $\bar N$ be a $\clf_t$-PRM with points in $\cly_T$ with intensity measure $\nu_T$. Assume that for all $0\le s \le t< \infty$, $(\bar N((s,t]\times \cdot)), W(t) -W(s))$ is independent of $\clf_s$. Let $\clp\clf$ be the $\clf_t$-predictable $\sigma$-field on $[0,T]\times \Om$. Denote by $\tilde \cla^W$ and $\tilde \cla^W_b$ the collections $\tilde \cla$ and $\tilde \cla_b$ introduced below Defintiion \ref{def:hbm}. Also, denote by $\tilde \cla^N$ and $\tilde \cla^N_b$ the collections $\tilde \cla$ and $\tilde \cla_b$ introduced earlier in the current section. Let $\tilde \cla_b \doteq \tilde \cla^W_b \times \tilde \cla^N_b$ and $\tilde \cla \doteq \tilde \cla^W \times \tilde \cla^N$.
For $v=(\psi, \varphi) \in \tilde \cla$, let $L_T^W(\psi) \doteq \frac{1}{2} \int_0^T \|\psi(s)\|_0^2 ds$, where the norm $\|\cdot\|_0$ is as introduced in Section \ref{sec:HS},
and let $L_T^N(\varphi) \doteq  \int_{\clx_T} \ell(\varphi(t,x)) \nu_T(dt\times dx)$. Also, set $ L_T(v) \doteq L_T^W(\psi) + L_T^N(\varphi)$.
Next, for $\psi \in \tilde \cla^W$, let $W^{\psi}(t) \doteq W(t) + \int_0^t \psi(s) ds$, $t\in [0,T]$.

The following is \cite[Theorem 8.19]{buddupbook}.

\begin{theorem}\label{thm:jumpdiff}
	Let $F: \clc_0^T(H) \times \MM \to \RR$ be a bounded measurable map. Then for $\theta \in (0,\infty)$
	$$-\log \EE \exp\{- F(W, N^{\theta})\} = \inf_{v = (\psi, \varphi) \in \clr} E\left[\theta L^T(v) + F(W^{\sqrt{\theta}}, N^{\theta \psi})\right],$$
	where $\clr \in \{\tilde \cla_b, \tilde \cla\}$. 
\end{theorem}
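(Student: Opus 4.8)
The plan is to deduce Theorem \ref{thm:jumpdiff} from the two single-noise representations already in hand, Theorem \ref{thm:hilsp} for the $Q$-Wiener part and Theorem \ref{thm:jump} for the PRM part, by an iterated-conditioning argument that exploits the independence of $W$ and $\bar N$. Writing $\EE_W$ and $\EE_N$ for the expectations over the two independent noises, I would first freeze the Brownian path and define the partial log-Laplace functional
\[
G_\theta(w) \doteq -\log \EE_N \exp\{-F(w, N^{\theta})\}, \quad w \in \clc_0^T(H),
\]
which is again bounded and measurable in $w$. Independence gives $\EE\exp\{-F(W,N^\theta)\} = \EE_W\exp\{-G_\theta(W)\}$, so the task reduces to representing two nested log-Laplace functionals.

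For fixed $w$, Theorem \ref{thm:jump} applied to $m \mapsto F(w,m)$ yields $G_\theta(w) = \inf_{\varphi} \bar\EE[ \theta L_T^N(\varphi) + F(w, N^{\theta\varphi})]$. Next I would apply Theorem \ref{thm:hilsp} to the bounded measurable map $w \mapsto G_\theta(w)$; after the change of variables $v = \sqrt\theta\,\psi$ in the Wiener control, its cost $\tfrac12\int_0^T\|v\|_0^2\,ds$ becomes $\theta L_T^W(\psi)$ and the controlled noise $W + \int_0^\cdot v\,ds$ becomes $W^{\sqrt\theta\psi} \doteq W + \int_0^\cdot \sqrt\theta\,\psi(s)\,ds$, which is exactly the controlled Wiener process appearing on the right-hand side (and explains the $\sqrt\theta$ scaling there). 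Combining the two steps gives the nested representation
\[
-\log \EE \exp\{-F(W,N^\theta)\} = \inf_{\psi} \EE_W\Big[ \theta L_T^W(\psi) + \inf_{\varphi} \bar\EE\big[ \theta L_T^N(\varphi) + F(W^{\sqrt\theta\psi}, N^{\theta\varphi}) \big]\Big],
\]
and, since $L_T(v)=L_T^W(\psi)+L_T^N(\varphi)$ for $v=(\psi,\varphi)$, the two costs add up to $\theta L_T(v)$.

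I expect the main obstacle to be passing from this nested representation to the single joint infimum over $v=(\psi,\varphi)\in\tilde\cla$ asserted in the statement, i.e. interchanging the inner $\inf_\varphi$ with the outer expectation $\EE_W$. The difficulty is identical to the one highlighted in Section \ref{sec:extran}: the inner optimizing $\varphi$ depends measurably on the frozen path $w$, and only after a measurable-selection/discretization argument can one produce a genuine jointly measurable control $\varphi=\varphi(W,\cdot)$ and bring it outside the integral. Because this forces $\varphi$ (and symmetrically $\psi$) to be adapted to the filtration generated by both noises rather than by either one alone, the representation is available only for the larger-filtration classes $\tilde\cla_b$ and $\tilde\cla$, which is exactly why the canonical-filtration classes are absent from the statement. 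I would carry out this interchange along the lines of the measurable-selection arguments of \cite{buddupmar} underlying Theorems \ref{thm:hilsp} and \ref{prop_kill:rep_general}, rather than re-prove them.

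Finally, the equivalence of the two admissible classes is the routine part: since $\tilde\cla_b\subset\tilde\cla$ the infimum over $\tilde\cla$ is no larger than that over $\tilde\cla_b$, and the reverse inequality follows by a truncation argument, truncating $\psi$ and truncating $\varphi$ into the windows $[1/n,n]$ with $\varphi\equiv1$ off $K_n$, so that costs converge and, using boundedness of $F$ together with the mutual absolute continuity furnished by a combined Girsanov/Poisson change of measure as in Lemma \ref{lem:gir}, the controlled expectations converge as well. An alternative, more unified route avoids the iteration entirely: apply the Donsker--Varadhan formula $-\log\EE e^{-F}=\inf_{\QQ}\{R(\QQ\,\|\,\PP)+\EE_\QQ[F]\}$ directly on the joint space, use the combined change of measure to realize each $v\in\tilde\cla_b$ as a feasible $\QQ$ (giving the inequality $\le$), and use the joint martingale representation theorem for the $(W,\bar N)$-filtration to extract $(\psi,\varphi)$ from a near-optimal finite-entropy $\QQ$ (giving $\ge$), with the chain rule for relative entropy again yielding $R(\QQ\,\|\,\PP)=\theta L_T(v)$. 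The selection/larger-filtration issue resurfaces there in the extraction step, so either way that step is the crux.
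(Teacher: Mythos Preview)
The paper does not supply a proof of Theorem \ref{thm:jumpdiff}; it merely cites \cite[Theorem 8.19]{buddupbook} (originating in \cite{buddupmar2}). So there is no in-paper argument to compare against, only the approach of those references.

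Your second route --- the direct Donsker--Varadhan formula on the joint space, with the combined Girsanov/Poisson change of measure producing the inequality $-\log\EE e^{-F}\le\theta L_T(v)+\EE_\QQ F$ for every $v\in\tilde\cla_b$, and the joint martingale representation for the $(W,\bar N)$-filtration yielding the reverse inequality from a near-optimal finite-entropy $\QQ$ --- is essentially how the result is proved in \cite{buddupmar2} and \cite[Section 8.2]{buddupbook}. That part of your plan is on target.

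Your first route, however, has a genuine gap that the analogy with Section \ref{sec:extran} does not cover. In Theorem \ref{prop_kill:rep_general} the extra randomness $X$ is $\clf_0$-measurable, so after the measurable selection the unfrozen control $u(X,\cdot)$ is still $\{\clf_t\}$-predictable: the entire value of $X$ is known at time $0$. In your iteration neither $W$ nor $\bar N$ is $\clf_0$-measurable. If you freeze the \emph{whole} path $w\in\clc_0^T(H)$, apply Theorem \ref{thm:jump} to $m\mapsto F(w,m)$, and then select $\varphi=\varphi(w,\cdot)$ measurably, the resulting process $\varphi(W,\cdot)$ will in general depend on $W(s)$ for $s>t$ at time $t$ and is therefore not $\{\clf_t\}$-predictable, hence not in $\tilde\cla$. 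The measurable-selection machinery of \cite{buddupmar} does not fix this anticipativity issue; that machinery addresses measurability in a parameter already available at time $0$, not adaptedness to a filtration generated by a second evolving noise. So the interchange of the inner $\inf_\varphi$ with $\EE_W$ is not merely a technicality here --- as posed it would produce inadmissible controls. The clean way out is exactly your alternative unified route, which is also what the cited references do.
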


\begin{remark}
	In a similar manner as in Section \ref{sec:extran} one can give a variational representation for functionals that, in addition to depending on a Brownian motion and a PRM, also depend on another independent random variable. Also, in a manner similar to Section \ref{sec:infpath} one can obtain a variational representation for functionals that depend on an infinite path of a Brownian motion and a PRM, namely a function of $\{(W(t), N^{\theta}((0,,t]\times \cdot)), 0\le t<\infty\}$. We omit the details.
\end{remark}

\vspace{0.7cm}

\noindent {\bf Acknowledgements}
Research supported in part by the NSF (DMS-2134107 and DMS-2152577). 



\bibliographystyle{plain}

\bibliography{main2}

\vspace{\baselineskip}
%
%
\scriptsize{
\textsc{\noindent A. Budhiraja \newline
Department of Statistics and Operations Research, and\newline
School of Data Science and Society\newline
University of North Carolina\newline
Chapel Hill, NC 27599, USA\newline
email:  budhiraj@email.unc.edu \vspace{\baselineskip} }

}

\end{document}